\theoremstyle{definition}
\newtheorem{theorem}{Theorem}[section]
\newtheorem{proposition}[theorem]{Proposition}
\newtheorem{lemma}[theorem]{Lemma}
\newtheorem{remark}[theorem]{Remark}
\newtheorem{definition}[theorem]{Definition}
\def\inter{\mathrm{Int}}
\title{Hausdorff measures and packing measures of limit sets of CIFSs of generalized complex continued fractions\footnote{Date:\today. Published in J. Difference Equ. Appl. \textbf{26} (2020), no. 1, pp. 104--121. } \footnote{2010 Mathematics Subject Classification. 28A80, 37F35}}
\author{Kanji INUI\footnote{corresponding author}\\ Course of Mathematical Science, Department of Human Coexistence, \\
Graduate School of Human and Environmental Studies, Kyoto University\\
Yoshida-nihonmatsu-cho, Sakyo-ku, Kyoto, 606-8501, JAPAN\\
E-mail: inui.kanji.43a@st.kyoto-u.ac.jp\\
 \\ 
 Hiroki SUMI \\Course of Mathematical Science, Department of Human Coexistence, \\
Graduate School of Human and Environmental Studies, Kyoto University\\
Yoshida-nihonmatsu-cho, Sakyo-ku, Kyoto, 606-8501, JAPAN\\
E-mail: sumi@math.h.kyoto-u.ac.jp\\
Homepage: http://www.math.h.kyoto-u.ac.jp/\textasciitilde sumi/index.html
}
\date{}
\begin{document}
\maketitle
\begin{abstract}
We consider a certain family of CIFSs of the generalized complex continued fractions with a complex parameter space. 
We show that for each CIFS of the family, the Hausdorff measure of the limit set of the CIFS with respect to the Hausdorff dimension is zero and the packing measure of the limit set of the CIFS with respect to the Hausdorff dimension is positive (main result). 
This is a new phenomenon of infinite CIFSs which cannot hold in finite CIFSs. 
We prove the main result by showing some estimates for the unique conformal measure of each CIFS of the family and by using some geometric observations. 
\footnote{Keywords. infinite conformal iterated function systems, fractal geometry, Hausdorff measures, packing measures, generalized complex continued fractions. }
\end{abstract}
\section{Introduction}
Recent studies of fractal geometry have been developed in many directions. 
One of the most developed is the study of the limit sets of conformal iterated function systems (for short, CIFSs). 
Indeed, the general theory of limit sets of CIFSs with finitely many mappings (for short, \textit{finite} CIFSs) has been well studied (see \cite{F}, \cite{MU}). 
For example, there exists the formula on the Hausdorff dimension of the limit sets, and there exist statements which claim that the Hausdorff measure of the limit set of any finite CIFS with respect to the Hausdorff dimension is positive and finite and the packing measure of the limit set with respect to the Hausdorff dimension is also positive and finite (from this, we deduce that the Hausdorff dimension of the limit set of any finite CIFS and the packing dimension of the limit set are the same in general). 

On the other hand, studies of limit sets of conformal iterated function systems with infinitely many mappings (for short, \textit{infinite} CIFS) were initiated by Mauldin and Urba\'{n}ski (\cite{MU}, \cite{MU2}, \cite{MU3}) and there are many related results on infinite CIFSs with overlaps by Mihailescu and Urba\'{n}ski (\cite{MiU}, \cite{MiU2}). 
Mauldin and Urba\'{n}ski found a formula on the Hausdorff dimension of limit sets generalizing the above formula on the Hausdorff dimension of limit sets of finite CIFSs. 
In addition, they found a condition under which the Hausdorff measure of the limit set of the infinite CIFS with respect to the Hausdorff dimension is zero. 

Moreover, Mauldin and Urba\'{n}ski constructed an interesting example of an infinite CIFS which is related to the complex continued fractions in the paper \cite{MU}. 
The construction of the example is the following. 
Let $X := \{ z \in \mathbb{C} \ | \ |z-1/2| \leq 1/2 \}$. 
We call $\hat{S} := \{ \hat{\phi}_{(m, n)}(z) \colon X \to X \ | \ (m, n) \in \mathbb{Z} \times \mathbb{N} \}$ the CIFS of complex continued fractions, where $\mathbb{Z}$ is the set of the integers, $\mathbb{N}$ is the set of the positive integers and
\[ \hat{\phi}_{(m, n)}(z) := \frac{1}{z + m + ni} \quad (z \in X). \]
Let $\hat{J}$ be the limit set of $\hat{S}$ (see Definition \ref{CIFSDef}) and $\hat{h}$ be the Hausdorff dimension of $\hat{J}$. 
For each $s \geq 0$, we denote by $\mathcal{H}^{s}$ the $s$-dimensional Hausdorff measure and denoted by $\mathcal{P}^{s}$ the $s$-dimensional packing measure. 
For this example, Mauldin and Urba\'{n}ski showed the following theorem.
\begin{theorem}[D. Mauldin, M. Urbanski (1996)] \label{knownmainresult}
Let $S$ be the CIFS of complex continued fractions. 
Then, we have that $\mathcal{H}^{\hat{h}}(\hat{J}) = 0$ and $\mathcal{P}^{\hat{h}}(\hat{J}) > 0$. 
\end{theorem}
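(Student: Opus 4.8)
The plan is to work with the unique $\hat h$-conformal measure $\hat m$ of $\hat S$: a Borel probability measure supported on $\hat J$ with $\hat m(\hat\phi_{e}(A)) = \int_{A}|\hat\phi_{e}'|^{\hat h}\,d\hat m$ for every $e\in E:=\mathbb Z\times\mathbb N$ and Borel $A\subseteq X$, and with the bounded distortion estimate $\hat m(\hat\phi_{\tau}(A))\asymp\|\hat\phi_{\tau}'\|^{\hat h}\,\hat m(A)$ for all finite words $\tau$; in particular $\hat m(\hat\phi_{\tau}(X))\asymp D_{\tau}^{\hat h}$, where $D_{\tau}:=\operatorname{diam}\hat\phi_{\tau}(X)$ and $D_{\tau e}/D_{\tau}\asymp|e|^{-2}$. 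Let $\pi\colon E^{\mathbb N}\to\hat J$ be the coding map and $\sigma$ the shift. The two assertions will be read off from the Rogers--Taylor density theorems: if $\limsup_{r\to0}\hat m(B(x,r))/(2r)^{\hat h}\ge\lambda$ for every $x$ in a Borel set $A$, then $\mathcal H^{\hat h}(A)\le\hat m(A)/\lambda$; and if $\liminf_{r\to0}\hat m(B(x,r))/(2r)^{\hat h}\le C$ for every $x\in A$, then $\mathcal P^{\hat h}(A)\gtrsim\hat m(A)/C$. So everything reduces to density estimates, which come from the explicit geometry: $\hat\phi_{(m,n)}(X)$ is comparable to a disc of diameter $\asymp(m^{2}+n^{2})^{-1}$ at distance $\asymp(m^{2}+n^{2})^{-1/2}$ from $0$, so the cylinders $\hat\phi_{e}(X)$ form an inverted lattice accumulating at $0$ (note $0\notin\hat J$ but $0\in\overline{\hat J}$), and likewise $\{\hat\phi_{\tau e}(X)\}_{e}$ accumulates at the cluster point $\hat\phi_{\tau}(0)$.

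The geometric heart is the computation, valid precisely because $1<\hat h<2$ for this system,
\[
\hat m(B(0,r))\asymp r^{\,2\hat h-2},\qquad\text{and for }r\ll D_{\tau},\quad \hat m\bigl(B(\hat\phi_{\tau}(0),r)\bigr)\asymp D_{\tau}^{\,2-\hat h}\,r^{\,2\hat h-2}.
\]
Indeed a cylinder $\hat\phi_{\tau e}(X)$, $e=(m,n)$, meets $B(\hat\phi_{\tau}(0),r)$ essentially only when $D_{\tau}(m^{2}+n^{2})^{-1/2}\lesssim r$, and then
\[
\hat m\bigl(B(\hat\phi_{\tau}(0),r)\bigr)\ \asymp\!\!\sum_{m^{2}+n^{2}\gtrsim(D_{\tau}/r)^{2}}\!\!D_{\tau}^{\hat h}(m^{2}+n^{2})^{-\hat h}\ \asymp\ D_{\tau}^{\hat h}\,(D_{\tau}/r)^{2-2\hat h},
\]
the series converging because $\hat h>1$. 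In particular the upper $\hat h$-density of $\hat m$ at each $\hat\phi_{\tau}(0)$ is $\asymp(D_{\tau}/r)^{2-\hat h}\to\infty$, since $\hat h<2$.

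For $\mathcal H^{\hat h}(\hat J)=0$: take $x=\pi(\omega)$ whose coding $\omega$ has unbounded moduli $|\omega_{n}|$, and pick $n_{k}\to\infty$ with $|\omega_{n_{k}+1}|\to\infty$. Since $|\pi(\sigma^{n_{k}}\omega)|\asymp|\omega_{n_{k}+1}|^{-1}$, the point $x=\hat\phi_{\omega|n_{k}}(\pi(\sigma^{n_{k}}\omega))$ lies within $\asymp D_{\omega|n_{k}}|\omega_{n_{k}+1}|^{-1}$ of the cluster point $\hat\phi_{\omega|n_{k}}(0)$; hence for a suitable constant $C$ the ball $B(x,r_{k})$ with $r_{k}:=C\,D_{\omega|n_{k}}|\omega_{n_{k}+1}|^{-1}\to0$ contains every cylinder $\hat\phi_{(\omega|n_{k})(m,n)}(X)$ with $m^{2}+n^{2}\gtrsim|\omega_{n_{k}+1}|^{2}$, so by the relativized computation $\hat m(B(x,r_{k}))\gtrsim D_{\omega|n_{k}}^{\hat h}\,|\omega_{n_{k}+1}|^{2-2\hat h}$ and therefore $\hat m(B(x,r_{k}))/r_{k}^{\hat h}\gtrsim|\omega_{n_{k}+1}|^{\,2-\hat h}\to\infty$. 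Thus the upper $\hat h$-density of $\hat m$ is infinite at every point of $\hat J\setminus\mathcal B$, where $\mathcal B:=\{\pi(\omega):(|\omega_{n}|)_{n}\text{ bounded}\}=\bigcup_{R\in\mathbb N}\hat J_{R}$ and $\hat J_{R}$ is the limit set of the finite subsystem $\{\hat\phi_{e}:|e|\le R\}$; the first density theorem gives $\mathcal H^{\hat h}(\hat J\setminus\mathcal B)=0$. As $\dim_{H}\hat J_{R}=\hat h_{R}<\hat h$ for each $R$ (the dimension strictly increases along finite subsystems and $\hat h=\sup_{R}\hat h_{R}$ is not attained), also $\mathcal H^{\hat h}(\mathcal B)=0$, whence $\mathcal H^{\hat h}(\hat J)=0$.

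For $\mathcal P^{\hat h}(\hat J)>0$: let $e_{0}$ be the symbol of least modulus. Since the $\sigma$-invariant measure on $E^{\mathbb N}$ equivalent to the pullback of $\hat m$ is ergodic and assigns the cylinder $[e_{0}]$ positive mass, for $\hat m$-a.e.\ $x=\pi(\omega)$ there are $n_{k}\to\infty$ with $\omega_{n_{k}+1}=e_{0}$. For such $n_{k}$ one has $D_{\omega|(n_{k}+1)}\asymp D_{\omega|n_{k}}$, and $x$ sits at distance $\asymp D_{\omega|n_{k}}$ from the cluster point $\hat\phi_{\omega|n_{k}}(0)$ (because $e_{0}$ is not a large symbol); decomposing $\hat m(B(x,D_{\omega|(n_{k}+1)}))$ over the maximal cylinders of diameter $<D_{\omega|(n_{k}+1)}$ meeting the ball and estimating by the cone condition --- boundedly many ``isolated'' such cylinders, together with finitely many clusters $\{\hat\phi_{\rho e}(X)\}_{e}$ around points $\hat\phi_{\rho}(0)$, each of total mass $\le\hat m(\hat\phi_{\rho}(X))\lesssim D_{\omega|(n_{k}+1)}^{\hat h}$ --- yields $\hat m(B(x,D_{\omega|(n_{k}+1)}))\le C_{0}\,D_{\omega|(n_{k}+1)}^{\hat h}$. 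Hence $\liminf_{r\to0}\hat m(B(x,r))/(2r)^{\hat h}\le C_{0}$ for $\hat m$-a.e.\ $x$, and the second density theorem gives $\mathcal P^{\hat h}(\hat J)\ge\mathcal P^{\hat h}(\{\text{such }x\})\gtrsim1/C_{0}>0$. The main obstacle is exactly this last bounded-geometry bookkeeping: making ``cluster'' precise, using the cone condition to bound the number of cylinders of size $\asymp D_{\omega|(n_{k}+1)}$ meeting the ball, and checking that no cluster around a point $\hat\phi_{\rho}(0)$ with $D_{\rho}\gg D_{\omega|(n_{k}+1)}$ contributes more than $O(D_{\omega|(n_{k}+1)}^{\hat h})$ --- together with the parallel care in the Hausdorff direction that the cylinders counted inside $B(x,r_{k})$ genuinely lie there and carry disjoint $\hat m$-mass. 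The remaining inputs --- existence and bounded distortion of $\hat m$, the bounds $1<\hat h<2$ and $\hat h_{R}<\hat h$, and ergodicity of the equilibrium state --- are available from the work of Mauldin and Urbański.
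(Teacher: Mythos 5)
First, note that the paper does not actually prove Theorem \ref{knownmainresult}: it is quoted from \cite{MU}, and the paper's own argument is the proof of Theorem \ref{main} for the analogous systems $S_{\tau}$, which proceeds by citing two black boxes from \cite{MU} --- Theorem \ref{Hausdorffmeasureestimate} (blow-up of $m(B(z,r))/r^{h}$ at a point of $X_{S}(\infty)$ forces $\mathcal{H}^{h}(J)=0$) and Theorem \ref{packingmeasureestimate} ($J\cap\inter(X)\neq\emptyset$ forces $\mathcal{P}^{h}(J)>0$) --- and verifying their hypotheses. Your Hausdorff half is correct and is a genuinely different (more self-contained) route: your core computation $\hat{m}(B(0,r))\asymp r^{2\hat{h}-2}$, obtained by summing $\hat{m}(\hat{\phi}_{e}(X))\asymp|e|^{-2\hat{h}}$ over the $\asymp r^{-2}$ lattice points with $|e|\asymp r^{-1}$, is exactly the paper's estimate (\ref{conformalmeasureestimate}), and the additional work you do --- conjugating the estimate into the cylinders $\hat{\phi}_{\omega|n_{k}}$ to get infinite upper density at every point with unbounded coding, then killing the bounded-coding set $\mathcal{B}=\bigcup_{R}\hat{J}_{R}$ via $\hat{h}_{R}<\hat{h}$ and applying the Rogers--Taylor density theorem --- is essentially a re-proof of Theorem \ref{Hausdorffmeasureestimate} in this special case. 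What the paper's route buys is brevity; what yours buys is that the mechanism is visible rather than hidden in a citation.

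The packing half, however, has a genuine gap, and you have located it yourself: the inequality $\hat{m}\bigl(B(x,D_{\omega|(n_{k}+1)})\bigr)\leq C_{0}\,D_{\omega|(n_{k}+1)}^{\hat{h}}$ is the entire content of the statement $\mathcal{P}^{\hat{h}}(\hat{J})>0$, and it is asserted, not proved. The bookkeeping you propose is not merely tedious but genuinely dangerous with your choice of radius: since $r=D_{\omega|(n_{k}+1)}\asymp D_{\omega|n_{k}}=\mathrm{diam}\,\hat{\phi}_{\omega|n_{k}}(X)$, the ball $B(x,r)$ is \emph{not} contained in $\hat{\phi}_{\omega|n_{k}}(X)$, so it receives mass from incomparable cylinders and, worse, from the clusters around $\hat{\phi}_{\omega|j}(0)$ for $j<n_{k}$; a cluster with $D_{\rho}\gg s$ carries mass $\asymp D_{\rho}^{2-\hat{h}}s^{2\hat{h}-2}=s^{\hat{h}}(D_{\rho}/s)^{2-\hat{h}}\gg s^{\hat{h}}$ within distance $s$ of its center, precisely because $\hat{h}<2$, so one must check quantitatively that $x$ never comes too close to such a center --- exactly the estimate you defer. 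The standard way out (and the reason Theorem \ref{packingmeasureestimate} needs only the hypothesis $\hat{J}\cap\inter(X)\neq\emptyset$) is to shrink the radius so that the ball sits inside a single cylinder: choose $e_{0}$ with $\hat{\phi}_{e_{0}}(X)$ \emph{compactly} contained in $\inter(X)$ (your ``symbol of least modulus'' does not work here, since e.g.\ $\hat{\phi}_{(1,1)}(X)$ touches $\partial X$; the paper's analogous choice is $b_{2}=2+\tau$), so that $\mathrm{dist}(\hat{\phi}_{e_{0}}(X),\partial X)\geq 2\theta>0$. At a return time $\pi(\sigma^{n_{k}}\omega)\in\hat{\phi}_{e_{0}}(X)$, bounded distortion and the Koebe-type inclusion give $B(x,\theta K^{-1}D_{\omega|n_{k}})\subset\hat{\phi}_{\omega|n_{k}}(B(\pi(\sigma^{n_{k}}\omega),2\theta))\subset\hat{\phi}_{\omega|n_{k}}(X)$, whence $\hat{m}(B(x,\theta K^{-1}D_{\omega|n_{k}}))\leq\hat{m}(\hat{\phi}_{\omega|n_{k}}(X))\lesssim D_{\omega|n_{k}}^{\hat{h}}$ with no cluster accounting at all, and your density-theorem conclusion then goes through. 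Without this (or an honest execution of the bookkeeping), the second half of the proof is incomplete.
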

This is an example of infinite CIFS of which the Hausdorff measure of the limit set with respect to the Hausdorff dimension is zero and the packing measure of the limit set is positive. 
Note that this is a new phenomenon of infinite CIFSs which cannot hold in finite CIFSs
. 

It is interesting for us to ask for an infinite CIFS $S$, how often we have the situation that $\mathcal{H}^{h_{S}}(J_{S}) = 0$ and $\mathcal{P}^{h_{S}}(J_{S}) > 0$, where 
we denote by $J_{S}$ the limit set of $S$ (see Definition \ref{CIFSDef}) and $h_{S}$ the Hausdorff dimension of $J_{S}$. 
We considered the generalization of $\hat{S}$ in our previous paper \cite{IOS}. 
That is, we introduced a family of CIFSs of the generalized complex continued fractions $\{ S_{\tau} \}_{\tau \in A_{0}}$ to present new and interesting examples of infinite CIFSs. 
Note that $\{ S_{\tau} \}_{\tau \in A_{0}}$ is a family of CIFSs which has uncountably many elements. 
The aim of this paper is to generalize Theorem \ref{knownmainresult} and to show that $\mathcal{H}^{h_{S_{\tau}}}(J_{S_{\tau}}) = 0$ and $\mathcal{P}^{h_{S_{\tau}}}(J_{S_{\tau}}) > 0$ for each $\tau \in A_{0}$ to find new and interesting examples of infinite CIFSs with the phenomenon which cannot hold in finite CIFSs. 

The precise statement is the following. 
Let \[A_{0} := \{ \tau = u + iv \in \mathbb{C} \ | \ u \geq 0 \ \mathrm{and} \ v \geq 1 \}\] and $X := \{ z \in \mathbb{C} \ | \ |z-1/2| \leq 1/2 \}$. 
Also, we set $I_{\tau} := \{ m +n\tau \in \mathbb{C} \ |\ m, n \in \mathbb{N} \}$ for each $\tau \in A_{0}$, where $\mathbb{N}$ is the set of the positive integers. 
\begin{definition}[The CIFS of generalized complex continued fractions]
For each $\tau \in A_{0}$, $S_{\tau} := \{ \phi_{b} \colon X \rightarrow X \ |\ b \in I_{\tau} \} $ is called the CIFS of generalized complex continued fractions. Here, 
\[ \phi_{b}(z) := \frac{1}{z + b} \quad ( z \in X ). \]
\end{definition}
The family $\{S_{\tau}\}_{\tau \in A_{0}}$ is called the family of CIFSs of generalized complex continued fractions. 
For each $\tau \in A_{0}$, let $J_{\tau}$ be the limit set of the CIFS $S_{\tau}$ (see Definition \ref{CIFSDef}) and let $h_{\tau}$ be the Hausdorff dimension of the limit set $J_{\tau}$. 

We remark that this family of CIFSs is a generalization of $\hat{S}$ in some sense. 
The system $S_{\tau}$ is related to ``generalized" complex continued fractions since each point of the limit set of $S_{\tau}$ is of the form
\begin{equation*}
\cfrac{1}{b_{1} + \cfrac{1}{b_{2} + \cfrac{1}{b_{3} + \cdots}}}
\end{equation*}
for some sequence $( b_{1}, b_{2}, b_{3}, \ldots) $ in $I_{\tau}$ (see Definition \ref{CIFSDef}). 
Note that there are many kinds of general theories for continued fractions and related iterated function systems (\cite{IK}, \cite{MU}, \cite{MU2}, \cite{MiU2}). 

We now give the main result of this paper.  
\begin{theorem}[Main result] \label{main}
Let $\{S_{\tau}\}_{\tau \in A_{0}}$ be the family of CIFSs of generalized complex continued fractions. Then, for each $\tau \in A_{0}$, we have $\mathcal{H}^{h_{\tau}}(J_{\tau}) = 0$ and $\mathcal{P}^{h_{\tau}}(J_{\tau}) > 0$. 
\end{theorem}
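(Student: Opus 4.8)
The plan is to study the unique $h_\tau$-conformal measure $\mu_\tau$ of $S_\tau$ (a Borel probability measure carried by $J_\tau$, whose existence, together with the bounds $1<h_\tau<2$, is part of the general theory developed in \cite{MU} and established for this family in \cite{IOS}) and to estimate $\mu_\tau\bigl(B(x,r)\bigr)$ for $x\in J_\tau$ along well-chosen radii $r\downarrow0$. The theorem then follows from the classical density comparison principles: if $\limsup_{r\downarrow0}\mu_\tau(B(x,r))/r^{h_\tau}=\infty$ for every $x$ in a Borel set $A$, then $\mathcal H^{h_\tau}(A)=0$; and if there is $C<\infty$ with $\liminf_{r\downarrow0}\mu_\tau(B(x,r))/r^{h_\tau}\le C$ for every $x$ in a Borel set $B$ with $\mu_\tau(B)>0$, then $\mathcal P^{h_\tau}(B)>0$. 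Throughout one uses the bounded distortion of the compositions $\phi_\omega$ and the conformality relation $\mu_\tau(\phi_b(E))=\int_E|\phi_b'|^{h_\tau}\,d\mu_\tau$, and $\|\phi_\omega'\|$ denotes $\sup_X|\phi_\omega'|$.

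\medskip
\noindent\emph{Vanishing of the Hausdorff measure.} The first ingredient is a sharp estimate for $\mu_\tau$ near the cusp: $0\in\partial X$ is the only accumulation point of the first-level cylinders, $\phi_b(X)\subset B(0,2/|b|)$ for $|b|\ge 2$, and $\mu_\tau(\phi_b(X))=\int_X|\phi_b'|^{h_\tau}\,d\mu_\tau\asymp|b|^{-2h_\tau}$. Since $\#\{b\in I_\tau:|b|\le R\}\asymp R^2$ and $h_\tau>1$, summing the (essentially disjoint) cylinders contained in a small disc about $0$ gives
\[
\mu_\tau\bigl(B(0,r)\cap J_\tau\bigr)\ \asymp\ r^{\,2h_\tau-2}\qquad(0<r\le r_0).
\]
Let $x\in J_\tau$ be uniquely coded by $(b_1,b_2,\dots)$ with $\limsup_n|b_n|=\infty$. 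For $n$ with $M:=|b_{n+1}|$ large, put $\omega=b_1\cdots b_n$; then $x\in\phi_\omega(\phi_{b_{n+1}}(X))$ with $\phi_{b_{n+1}}(X)\subset B(0,2/M)$, so by bounded distortion $|x-\phi_\omega(0)|\lesssim\|\phi_\omega'\|/M$. Taking the intermediate radius $r\asymp\|\phi_\omega'\|\,M^{-1/2}$ one gets $B(x,r)\supset\phi_\omega\bigl(B(0,c/\sqrt M)\cap J_\tau\bigr)$ for a fixed $c>0$, and conjugating the previous estimate by $\phi_\omega$ yields
\[
\frac{\mu_\tau(B(x,r))}{r^{h_\tau}}\ \gtrsim\ \frac{\|\phi_\omega'\|^{h_\tau}M^{\,1-h_\tau}}{\|\phi_\omega'\|^{h_\tau}M^{-h_\tau/2}}\ =\ M^{\,1-h_\tau/2}\ \longrightarrow\ \infty\qquad(n\to\infty),
\]
the positivity of the exponent being exactly where $h_\tau<2$ enters; since $\|\phi_\omega'\|\to0$ the radii tend to $0$, so $\limsup_{r\downarrow0}\mu_\tau(B(x,r))/r^{h_\tau}=\infty$. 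Let $A$ be the set of all such $x$. Its complement in $J_\tau$ is contained in $\bigl(\bigcup_N J(S_\tau^{(N)})\bigr)\cup Z$, where $S_\tau^{(N)}$ is the finite subsystem generated by $\{\phi_b:|b|\le N\}$ and $Z$ is the set of non-uniquely coded points; $Z$ lies on countably many circles, hence is $\mathcal H^{h_\tau}$-null (as $h_\tau>1$), and by the strict monotonicity of the Hausdorff dimension of CIFS limit sets one has $\dim_H J(S_\tau^{(N)})<h_\tau$, so $\mathcal H^{h_\tau}(\bigcup_N J(S_\tau^{(N)}))=0$. The first density principle applied to $A$ now gives $\mathcal H^{h_\tau}(J_\tau)=0$.

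\medskip
\noindent\emph{Positivity of the packing measure.} Here one uses that, away from $\partial X$, the measure $\mu_\tau$ is not over-concentrated. Fix $b^\ast\in I_\tau$ with $\phi_{b^\ast}(X)\subset\operatorname{Int}X$ and put $\varepsilon_0=\operatorname{dist}(\phi_{b^\ast}(X),\partial X)>0$, so that $E:=\{z\in J_\tau:\operatorname{dist}(z,\partial X)\ge\varepsilon_0\}\supseteq\phi_{b^\ast}(J_\tau)$ has $\mu_\tau(E)\ge\mu_\tau(\phi_{b^\ast}(X))>0$. As $\mu_\tau$ is equivalent to an ergodic shift-invariant probability measure for the symbolic system of $S_\tau$, for $\mu_\tau$-a.e.\ $x$ there are infinitely many $n$ with $\sigma^n(x)\in E$; for such $n$, with $\omega=b_1\cdots b_n$, bounded distortion gives $\operatorname{dist}(x,\partial(\phi_\omega(X)))\gtrsim\varepsilon_0\|\phi_\omega'\|\asymp\varepsilon_0\operatorname{diam}(\phi_\omega(X))$, so a ball $B(x,\rho_n)$ with $\rho_n\asymp\varepsilon_0\operatorname{diam}(\phi_\omega(X))\to0$ is contained in $\phi_\omega(X)$ and hence
\[
\frac{\mu_\tau(B(x,\rho_n))}{\rho_n^{h_\tau}}\ \le\ \frac{\mu_\tau(\phi_\omega(X))}{\rho_n^{h_\tau}}\ \asymp\ \frac{\|\phi_\omega'\|^{h_\tau}}{\varepsilon_0^{h_\tau}\|\phi_\omega'\|^{h_\tau}}\ \asymp\ \varepsilon_0^{-h_\tau}.
\]
Thus $\liminf_{r\downarrow0}\mu_\tau(B(x,r))/r^{h_\tau}\le C$ for $\mu_\tau$-a.e.\ $x$, and the second density principle gives $\mathcal P^{h_\tau}(J_\tau)>0$.

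\medskip
\noindent\emph{Main obstacle.} The technical heart is the cusp estimate $\mu_\tau(B(0,r)\cap J_\tau)\asymp r^{2h_\tau-2}$ and its faithful transport by the Möbius maps $\phi_\omega$: this forces one to control simultaneously the counting function of $I_\tau$, uniform distortion bounds for compositions of the non-uniformly-contracting maps $\phi_b(z)=1/(z+b)$, and the a priori facts $1<h_\tau<2$ and the strict dimension monotonicity for CIFSs, and to do so uniformly over the entire parameter region $A_0$ — including parameters (such as $\tau=i$, or $\tau$ of large modulus) for which $I_\tau$ is comparatively sparse, $h_\tau$ is close to $1$, and some cylinders $\phi_b(X)$ may touch $\partial X$.
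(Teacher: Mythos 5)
Your proposal is correct in substance and rests on the same core ingredients as the paper --- the cusp estimate $m_{\tau}(B(0,r))\gtrsim r^{2h_{\tau}-2}$, obtained from the lattice count $|\{b\in I_{\tau} : |b|\le R\}|\gtrsim R^{2}$ combined with $m_{\tau}(\phi_{b}(X))\gtrsim |b|^{-2h_{\tau}}$ and the essential disjointness of first-level cylinders; the bounds $1<h_{\tau}<2$; and the existence of a cylinder $\phi_{b^{\ast}}(X)\subset\inter(X)$ --- but it packages them quite differently. The paper stops at the cusp: since $X_{\tau}(\infty)=\{0\}$ (Lemma \ref{Xinftythorem}), Theorem \ref{Hausdorffmeasureestimate} (Theorem 4.9 of \cite{MU}) only requires blow-up of $m_{\tau}(B(0,r_{j}))/r_{j}^{h_{\tau}}$ along one sequence of radii at the single point $0$, which is immediate from $r^{h_{\tau}-2}\to\infty$; and $\mathcal{P}^{h_{\tau}}(J_{\tau})>0$ is reduced by Theorem \ref{packingmeasureestimate} (Lemma 4.3 of \cite{MU}) to the one-line observation $\phi_{2+\tau}(X)\subset\inter(X)$. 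You instead re-derive both reductions from the classical upper- and lower-density theorems: you transport the cusp estimate by $\phi_{\omega}$ to every point with unbounded coding (your exponent $M^{1-h_{\tau}/2}$ is where $h_{\tau}<2$ enters, versus $r^{h_{\tau}-2}$ in the paper), dispose of the exceptional set via strict dimension monotonicity of finite subsystems and the fact that overlap points lie on countably many circles (here $h_{\tau}>1$ is used a second time), and for the packing bound invoke the ergodic invariant measure equivalent to $m_{\tau}$ to produce scales at which $m_{\tau}$ is not over-concentrated. In effect you reprove the relevant special cases of Theorems 4.9 and 4.3 of \cite{MU}; this buys self-containedness at the cost of extra inputs (ergodicity, Koebe-type lower bounds for $\phi_{\omega}$, strict monotonicity $h_{F}<h_{\tau}$ for finite subsystems) that the paper's route avoids entirely. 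Two minor overstatements do not affect correctness: you assert $\asymp$ in the cusp estimate and in the lattice count where only the lower bounds are available and used, and the lattice count itself (which the paper establishes carefully in Lemma \ref{insidetwocircle}, Proposition \ref{latticepointlemma} and Lemma \ref{latticepointestimate}) is left as a standard fact.
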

\begin{remark}
It was shown that for each $\tau \in A_{0}$, $\overline{J_{\tau }}\setminus J_{\tau}$ is at most countable and $h_{\tau }=\dim _{\mathcal{H}}(\overline{J_{\tau }}$) (\cite{IOS}). 
Thus, for each $\tau \in A_{0}$, we have $\mathcal{H}^{h_{\tau}}(\overline{J_{\tau }}) = \mathcal{H}^{h_{\tau}}(J_{\tau }) = 0$. 
Also, for each $\tau \in A_{0}$, since the set of attracting fixed points of elements of the semigroup generated by $S_{\tau}$ is dense in $J_{\tau }$, Theorem 1.1 of \cite{St} implies that $\overline{J_{\tau }}$ is equal to the Julia set of the rational semigroup generated by $\{ \phi _{b}^{-1}\mid b\in I_{\tau }\}$.  
\end{remark}
\begin{remark}
By the general theories of finite CIFSs, the Hausdorff measure of the limit set of each finite CIFS with respect to the Hausdorff dimension and the packing measure of the limit set with respect to the Hausdorff dimension is positive and finite. 
However, Theorem \ref{main} indicates that for each $S_{\tau}$ of the family of CIFSs of generalized complex continued fractions, which consists of uncountably many elements, the Hausdorff measure of the limit set with respect to the Hausdorff dimension is zero and the packing measure of the limit set with respect to the Hausdorff dimension is positive. 
This is also a new phenomenon which cannot hold in the finite CIFSs. 
\end{remark}

Ideas and strategies to prove the main result are the following. 
To prove $\mathcal{H}^{h_{\tau}}(J_{\tau}) = 0$, we use some results from the paper \cite{MU} and some results from our previous paper \cite{IOS}. 
For example, we use the fact that for each $\tau \in A_{0}$, $S_{\tau}$ is hereditarily regular (see Lemma \ref{GCCFisahregularCIFS}), that for each $\tau \in A_{0}$, there exists a conformal measure of $S_{\tau}$ (see Proposition \ref{existenceofconformalmeasure}) and that for each $\tau \in A_{0}$, $1 < h_{\tau} < 2$ (see Lemma \ref{betweenoneandtwo}). 

In order to prove the main results, we show that for each $\tau \in A_{0}$, $S_{\tau}$ satisfies the assumption of Theorem \ref{Hausdorffmeasureestimate}. 
That is, by using Lemma \ref{Xinftythorem}, we need to show that there exists a sequence $\{r_{j}\}_{j=1}^{\infty}$ in the set of positive real numbers such that \begin{equation} \label{keyassumption}
\limsup_{j \to \infty} \frac{m_{\tau}(B(0, r_{j}))}{r_{j}^{h_{\tau}}} = \infty, 
\end{equation} where $m_{\tau}$ is the conformal measure of $S_{\tau}$ (see Proposition \ref{existenceofconformalmeasure}). 

It is worth pointing out that in order to prove the main result, we use some sharp estimates on the values of the conformal measure. 
In order to prove the sharp estimates, we have to show another basic estimate at first (see Lemma \ref{basicestimate}). 
By the properties of the conformal measure (see Proposition \ref{existenceofconformalmeasure}) and by Lemma \ref{basicestimate}, we have that there exists $K_{0} \geq 1$ such that for each $b \in I_{\tau}$, $\phi_{b}(X) \subset B(0, K_{0} |b|^{-1})$ and 
\begin{equation} \label{conformalBDPestimate}
m_{\tau}(\phi_{b}(X)) \geq \int_{X} |\phi_{b}^{\prime}|^{h_{\tau}} \mathrm{d} m_{\tau} \geq (K_{0}^{-1}|b|^{-2})^{h_{\tau}}m_{\tau}(X) \geq K_{0}^{-h_{\tau}}|b|^{-2h_{\tau}}. 
\end{equation}
Moreover, by using properties of the conformal measure (see Proposition \ref{existenceofconformalmeasure}), we have that for all $b, b^{\prime} \in I_{\tau}$ with $b \neq b^{\prime}$, $m_{\tau}(\phi_{b}(X) \cap \phi_{b^{\prime}}(X) ) = 0$. 
Then, for each $\tau \in A_{0}$, let $N_{\tau} \in \mathbb{N}$ be large enough and for each $r>0$, we set $I_{\tau}(r) := \{ b \in I_{\tau} \ | \ K_{0}r^{-1} < |b| \leq N_{\tau}K_{0}r^{-1} \}$. 
Note that if $b \in I_{\tau}(r)$, then $|b|^{-1} > K_{0}^{-1} N_{\tau}^{-1} r$ and $m_{\tau}(\phi_{b}(X)) \gtrsim r^{2h_{\tau}}$. 

We next show some basic results on the estimate of $|I_{\tau}(r)|$ (see Lemma \ref{insidetwocircle}, Proposition \ref{latticepointlemma} and Lemma \ref{latticepointestimate}) by the general theory of linear algebra and elementary geometric observations. 
By these results, we show that if $r >0$ is small enough, then $|I_{\tau}(r)| \gtrsim r^{-2}$ (see inequality (\ref{taulatticepointestimate})). 
In addition, by the inequality (\ref{conformalBDPestimate}), we deduce that for $b \in I_{\tau}(r)$, we have $B(0, r) \supset B(0, K_{0}|b|^{-1}) \supset \phi_{b}(X)$ and if $r>0$ is small enough, then 
\begin{equation} \label{conformalroughestimate}
m_{\tau}(B(0, r)) \geq \sum_{b \in I_{\tau}(r)} m_{\tau}(\phi_{b}(X)) \gtrsim \sum_{b \in I_{\tau}(r)} |b|^{-2h_{\tau}} \gtrsim \sum_{b \in I_{\tau}(r)} r^{2h_{\tau}} = |I_{\tau}(r)| \ r^{2h_{\tau}} \gtrsim r^{2h_{\tau}-2}
\end{equation}
(see inequality (\ref{conformalmeasureestimate})). 
By the inequality (\ref{conformalroughestimate}), we finally show that if $r > 0$ is small enough, 
\begin{equation*}
\frac{m_{\tau}(B(0, r))}{r^{h_{\tau}}} \gtrsim r^{h_{\tau}-2} = \left( \frac{1}{r} \right)^{2-h_{\tau}}. 
\end{equation*}
By Lemma \ref{betweenoneandtwo} (that is, $2 > h_{\tau}$), we deduce (\ref{keyassumption}). 

To prove $\mathcal{P}^{h_{\tau}}(J_{\tau}) > 0$, we need to show for each $\tau \in A_{0}$, $S_{\tau}$ satisfies the assumption of Theorem \ref{packingmeasureestimate}. 
That is, we need to show that for each $\tau \in A_{0}$, $J_{\tau} \cap \inter(X) \neq \emptyset$. 
By geometric observations and some properties of $\phi_{b} \in S_{\tau}$, we obtain that $J_{S} \cap \inter(X) \neq \emptyset$. 

The rest of the paper is organized as follows. 
In Section 2, we summarize the theory of CIFSs without proofs. 
In Section 3, we give the proofs of some properties of the CIFS of the generalized complex continued fractions. 
In Section 4, we prove the main result of this paper. \\
\\
\textbf{Acknowledgement. }  
The authors thank Rich Stankewitz for valuable comments.
The second author is partially supported by JSPS Kakenhi JP 18H03671, JP 19H01790.
%
%
%
%
%
%
%
\section{Conformal iterated function systems}
In this section, we recall general settings of CIFSs (\cite{MU}, \cite{MU2}). 
\begin{definition}[Conformal iterated function system] \label{CIFSDef}
Let $X \subset \mathbb{R}^{d}$ be a non-empty compact and connected set and let $I$ be a finite set or bijective to $\mathbb{N}$. Suppose that $I$ has at least two elements. 
We say that $S := \{ \phi_{i} \colon X \to X \ |\ i \in I \}$ is a conformal iterated function system (for short, CIFS) if $S$ satisfies the following conditions. 
\begin{enumerate}
\item Injectivity: For all $i \in I$, $\phi_{i} \colon X \to X$ is injective. 
\item Uniform Contractivity: There exists $c \in (0, 1) $ such that, for all $i \in I$ and $x, y \in X $, the following inequality holds. 
	\[ | \phi_{i}(x) - \phi_{i}(y) | \leq c| x - y |. \] 
\item Cone Condition: For all $x \in \partial X$, there exists an open cone $\mathrm{Con}(x, u, \alpha)$ with a vertex $x$, a direction $u$, an altitude $|u|$ and an angle $\alpha$ such that $\mathrm{Con}(x, u, \alpha)$ is a subset of $\inter(X)$. 
\item Open Set Condition(OSC): For all $ i, j \in I \ ( i \neq j )$, $ \phi_{i}(\inter(X)) \subset \inter(X)$ and  $ \phi_{i}(\inter(X)) \cap  \phi_{j}(\inter(X)) = \emptyset $. 
Here, $\inter(X)$ denotes the set of interior points of $X$ with respect to the topology in $\mathbb{R}^{d}$. 
\item Bounded Distortion Property(BDP): There exists $K \geq 1 $ and an open and connected subset $V \subset \mathbb{R}^{d}$ with $X \subset V$ such that for all $x, y \in V $ and for all $w \in I^{*} := \bigcup_{n=1}^{\infty} I^{n}$, the following inequality holds. \label{openconnectedV}
\[ |\phi^{\prime}_{w}(x)| \leq K \cdot |\phi^{\prime}_{w}(y)|. \]
Here, for each $n \in \mathbb{N}$ and $w = w_{1}w_{2} \cdots w_{n} \in I^{n}$, we set $\phi_{w} := \phi_{w_{1}} \circ \phi_{w_{2}} \circ \cdots \circ \phi_{w_{n}}$ and $\displaystyle |\phi^{\prime}_{w}(x)|$ denotes the  norm of the derivative of $\phi_{w}$ at $x \in X$ with respect to the Euclidean metric on $\mathbb{R}^{d}$. 
\item Conformality: There exists a positive number $\epsilon$ such that for all $i \in I $, $\phi_{i} $ extends to a $C^{1+\epsilon}$diffeomorphism on $V$ and $\phi_{i} $ is conformal on $V$, where $V$ is the open and connected subset introduced in \ref{openconnectedV}.  
\end{enumerate}
We set $I^{*} := \bigcup_{n = 1}^{\infty} I^{n}$. 
We endow $I$ with the discrete topology, and endow $I^{\infty} := I^{\mathbb{N}}$ with the product topology. 
Note that $I^{\infty}$ is a Polish space. In addition, if $I$ is a finite set, then $I^{\infty}$ is a compact metrizable space. 

Let $S$ be a CIFS. 
For each $w=w_{1}w_{2}w_{3} \cdots \in I^{\infty}$, we set $w|_{n} := w_{1}w_{2} \cdots w_{n} \in I^{n}$ and $\phi_{w|_{n}} := \phi_{w_{1}} \circ \phi_{w_{2}} \circ \cdots \circ \phi_{w_{n}}$. 
Then, we have $\bigcap_{n \in \mathbb{N}} \phi_{w|_{n}}(X)$ is a singleton. 
We denote it by $\{ x_{w} \}$. 
The coding map $\pi \colon I^{\infty} \rightarrow X$ of $S$ is defined by $w \mapsto x_{w}$. 
Note that $\pi \colon I^{\infty} \rightarrow X$ is continuous. 
A limit set of $S$ is defined by 
\[ J_{S} :=\pi(I^{\infty}) = \bigcup_{w \in I^{\infty}} \bigcap_{n \in \mathbb{N}} \phi_{w|_{n}}(X).  \]
\end{definition}
Note that for all CIFS $S$, $J_{S}$ is Borel set in X. 
For each IFS $S$, we set $h_{S} := \dim_{\mathcal{H}}J_{S}$, where $\dim_{\mathcal{H}}$ denotes the Hausdorff dimension. 

For any CIFS $S$, we define the pressure function of $S$ as follows. 
\begin{definition}[Pressure function]\label{PSD}
For each $n\in \mathbb{N}$, $[0, \infty]$-valued function $\psi^{n}_{S}$ is defined by 
\[\psi^{n}_{S}(t) := \sum_{w \in I^{n}} \left( \sup_{z \in X}|\phi_{w}^{\prime}(z)| \right)^{t} \quad (t \geq 0). \]

We set \[\displaystyle P_{S}(t) := \lim_{n \to \infty} \frac{1}{n} \log \psi^{n}_{S}(t) \in (-\infty, \infty]. \]
The function $P_{S} \colon [0, \infty) \to (-\infty, \infty]$ is called the pressure function of $S$. 
\end{definition}
Note that for all $t \geq 0$, $P_{S}(t)$ exists because of the following proposition. 
\begin{proposition}[\cite{MU} Lemma 3.2]\label{logsubadditive}
For all $m, n \in \mathbb{N}$ and $t \geq 0$, we have $\psi_{S}^{m+n}(t) \leq \psi_{S}^{m}(t)\psi_{S}^{n}(t)$. 
In particular, for all $t \geq 0$, $\log\psi_{S}^{n}(t)$ is subadditive with respect to $n \in \mathbb{N}$. 
\end{proposition}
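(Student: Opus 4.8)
The plan is to derive the submultiplicativity $\psi_{S}^{m+n}(t) \leq \psi_{S}^{m}(t)\psi_{S}^{n}(t)$ directly from the chain rule together with the identification of $I^{m+n}$ with $I^{m}\times I^{n}$ by concatenation, and then to read off the subadditivity of $n\mapsto \log\psi_{S}^{n}(t)$ by taking logarithms.

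First I would record the basic pointwise estimate on composed maps. Fix $u\in I^{m}$ and $v\in I^{n}$ and let $w=uv\in I^{m+n}$ denote their concatenation, so that $\phi_{w}=\phi_{u}\circ\phi_{v}$. By the chain rule, for every $z\in X$ the derivative satisfies $\phi_{w}'(z)=\phi_{u}'(\phi_{v}(z))\circ\phi_{v}'(z)$ as linear maps, hence
\[
|\phi_{w}'(z)| \leq |\phi_{u}'(\phi_{v}(z))|\cdot|\phi_{v}'(z)|
\]
by submultiplicativity of the operator norm (when the $\phi_{i}$ are conformal this is in fact an equality of norms). Since $\phi_{v}(X)\subset X$, taking suprema gives
\[
\sup_{z\in X}|\phi_{w}'(z)| \leq \Bigl(\sup_{y\in X}|\phi_{u}'(y)|\Bigr)\Bigl(\sup_{z\in X}|\phi_{v}'(z)|\Bigr),
\]
and raising both sides to the power $t\geq 0$ preserves the inequality.

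Next I would sum over all words of length $m+n$. The concatenation map $I^{m}\times I^{n}\to I^{m+n}$, $(u,v)\mapsto uv$, is a bijection, so
\[
\psi_{S}^{m+n}(t)=\sum_{u\in I^{m}}\sum_{v\in I^{n}}\Bigl(\sup_{z\in X}|\phi_{uv}'(z)|\Bigr)^{t} \leq \Bigl(\sum_{u\in I^{m}}\bigl(\sup_{y\in X}|\phi_{u}'(y)|\bigr)^{t}\Bigr)\Bigl(\sum_{v\in I^{n}}\bigl(\sup_{z\in X}|\phi_{v}'(z)|\bigr)^{t}\Bigr)=\psi_{S}^{m}(t)\psi_{S}^{n}(t),
\]
where interchanging the sums and factoring them is legitimate because every term is nonnegative, so all manipulations are valid in $[0,\infty]$. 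Finally, taking logarithms yields $\log\psi_{S}^{m+n}(t)\leq \log\psi_{S}^{m}(t)+\log\psi_{S}^{n}(t)$; note that $\psi_{S}^{n}(t)\in(0,\infty]$ because each $|\phi_{w}'(z)|>0$ (the maps are injective $C^{1}$ conformal), so the logarithm is well defined with values in $(-\infty,\infty]$ and the claimed subadditivity holds.

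I do not expect any genuine obstacle here. The only points needing a sentence of care are that $(u,v)\mapsto uv$ is a bijection onto $I^{m+n}$ and that the supremum of $|\phi_{w}'|$ over $X$ can be controlled by the two separate suprema using $\phi_{v}(X)\subset X$; the rest is bookkeeping with nonnegative (possibly infinite) sums, which stays valid even when $\psi_{S}^{n}(t)=\infty$.
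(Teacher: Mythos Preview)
Your argument is correct and is the standard proof: the chain rule plus $\phi_{v}(X)\subset X$ gives the termwise bound, the bijection $I^{m}\times I^{n}\to I^{m+n}$ lets you factor the nonnegative double sum, and positivity of each supremum makes the logarithm well defined. The paper itself does not supply a proof of this proposition but merely cites \cite{MU}, Lemma~3.2; your write-up is exactly the argument one finds there, so there is nothing to compare.
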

We set $\theta_{S} := \inf\{ t \geq 0 \ |\ \psi_{S}^{1}(t) < \infty \}$. 
By using the pressure function, we define properties of CIFSs. 
\begin{definition}[Regular, Strongly regular, Hereditarily regular]\label{RD}
Let $S$ be a CIFS. 
We say that $S$ is regular if there exists $t \geq 0$ such that $P_{S}(t) = 0$. 
We say that $S$ is strongly regular if there exists $t \geq 0$ such that $P_{S}(t) \in (0, \infty)$. 
We say that $S$ is hereditarily regular if for all $I^{\prime} \subset I$ with $|I \setminus I^{\prime}| < \infty $, $S^{\prime} := \{ \phi_{i} \colon X \to X \ |\ i \in I^{\prime} \} $ is regular. 
Here, for any set $A$, we denote by $|A|$ the cardinality of $A$. 
\end{definition}
Note that if a CIFS $S$ is hereditarily regular, then $S$ is strong regular and if $S$ is strong regular, then $S$ is regular. 

\begin{definition} \label{Xinftydefinition}
Let $S$ be a CIFS. We write $S$ as $\{\phi_{i}\}_{i \in I}$. 
Suppose that $I$ is a countable infinite set. 
Let $z \in X$ and $\{ z_{i} \}_{i \in I^{\prime}} \subset X$ with $I^{\prime} \subset I$ and $|I^{\prime}|=\infty$. 
We say that $\lim_{i \in I^{\prime}}z_{i} = z$ if for each $\epsilon > 0$, there exists $F^{\prime} \subset I^{\prime}$ with $|F^{\prime}| < \infty$ such that if $i \in I^{\prime} \setminus F^{\prime}$, then $|z_{i} - z| < \epsilon $. 
We set
\[ X_{S}(\infty) := \{ \lim_{i \in I^{\prime}}z_{i} \in X \ | \ ^{\exists}I^{\prime} \subset I, ^{\exists}\{z_{i}\}_{i \in I^{\prime}} \ \text{s.t.} \ |I^{\prime}|=\infty, z_{i} \in \phi_{i}(X) \ \ (i \in I^{\prime})\}. \]
Equivalently, $X_{S}(\infty)$ is the set of accumulation points of sequences in $\phi_{i}(X)$, $i \in I$, i.e. limits of infinite sequences from $\phi_{i}(X)$, $i \in I$.  
\end{definition}
%
%
%
%
%
Mauldin and Urba\'{n}ski showed the following results. 
Recall that $h_{S} := \dim_{\mathcal{H}} J_{S}$, where $\dim_{\mathcal{H}} J_{S}$ denotes the Hausdorff dimension of the limit set of $S$. 
%
\begin{theorem}[\cite{MU} Theorem 3.20]\label{hregularequivalent}
Let $I$ be infinite and let $S$ be a CIFS. Then, the following conditions are equivalent.  
\begin{enumerate}
\item $S$ is hereditarily regular. 
\item $\psi^{1}_{S}(\theta_{S}) = \infty$. 
\end{enumerate}
Especially, if $S$ is hereditarily regular, then we have $ \theta_{S} < h_{S} $.
\end{theorem}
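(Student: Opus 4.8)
The plan is to prove the equivalence (1)$\Leftrightarrow$(2) directly from the multiplicative structure of the functions $\psi^{n}_{S}$, and then to deduce $\theta_{S}<h_{S}$ from the blow-up of the pressure near $\theta_{S}$ together with the Mauldin--Urba\'{n}ski dimension formula. First I would record four elementary facts. (a) From the chain rule and the Bounded Distortion Property one gets $K^{-t}\psi^{m}_{S}(t)\psi^{n}_{S}(t)\le\psi^{m+n}_{S}(t)\le\psi^{m}_{S}(t)\psi^{n}_{S}(t)$ for all $t\ge 0$ and $m,n\in\mathbb{N}$; iterating the left inequality gives $\psi^{n}_{S}(t)\ge K^{-(n-1)t}\psi^{1}_{S}(t)^{n}$, so $\psi^{1}_{S}(t)=\infty$ forces $P_{S}(t)=\infty$, while on $\{t:\psi^{1}_{S}(t)<\infty\}$ one has $\log\psi^{1}_{S}(t)-t\log K\le P_{S}(t)\le\log\psi^{1}_{S}(t)$. (b) $P_{S}$ is non-increasing in $t$ (since $\sup_{X}|\phi_{i}'|\le c<1$) and convex (each $t\mapsto\log\psi^{n}_{S}(t)$ is convex), hence continuous on $(\theta_{S},\infty)$. (c) Deleting a finite $F\subset I$ changes $\psi^{1}_{S}(t)$ by a sum that is finite for every $t$, so the reduced system $S'=\{\phi_{i}\}_{i\in I\setminus F}$ satisfies $\theta_{S'}=\theta_{S}$. (d) By monotone convergence $\psi^{1}_{S}(t)\uparrow\psi^{1}_{S}(\theta_{S})$ as $t\downarrow\theta_{S}$, so if $\psi^{1}_{S}(\theta_{S})=\infty$ then $P_{S}(t)\to\infty$ as $t\downarrow\theta_{S}$ by (a). Write $\theta:=\theta_{S}$.

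For (2)$\Rightarrow$(1): assuming $\psi^{1}_{S}(\theta)=\infty$, fix $I'\subset I$ with $I\setminus I'$ finite; by (c), $\theta_{S'}=\theta$ and $\psi^{1}_{S'}(\theta)=\psi^{1}_{S}(\theta)-(\text{finite sum})=\infty$. On $(\theta,\infty)$ the function $P_{S'}$ is finite, convex and continuous, with $P_{S'}(t)\to\infty$ as $t\downarrow\theta$ by (d) and $P_{S'}(t)\le\log\psi^{1}_{S'}(t)\to-\infty$ as $t\to\infty$ (since $\psi^{1}_{S'}(t)\le c^{\,t-t_{0}}\psi^{1}_{S'}(t_{0})$ for a fixed $t_{0}\in(\theta,\infty)$). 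The intermediate value theorem then produces a zero of $P_{S'}$, so $S'$ is regular; as $I'$ was arbitrary, $S$ is hereditarily regular.

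For (1)$\Rightarrow$(2) I would argue the contrapositive. Suppose $\psi^{1}_{S}(\theta)<\infty$ (so $\theta>0$, because $\psi^{1}_{S}(0)=|I|=\infty$). Given $\epsilon\in(0,1)$, convergence of $\sum_{i\in I}(\sup_{X}|\phi_{i}'|)^{\theta}$ lets me choose a finite $F\subset I$ with $\sum_{i\in I\setminus F}(\sup_{X}|\phi_{i}'|)^{\theta}<\epsilon$; set $I':=I\setminus F$. Then $\psi^{1}_{S'}(\theta)<\epsilon$, hence $\psi^{n}_{S'}(\theta)\le\epsilon^{n}$ by (a), so $P_{S'}(\theta)\le\log\epsilon<0$; by monotonicity in (b), $P_{S'}(t)<0$ for all $t\ge\theta$, whereas $P_{S'}(t)=\infty$ for $t<\theta$ (where $\psi^{1}_{S'}(t)=\infty$). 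Thus $P_{S'}$ never vanishes, $S'$ is not regular, and $S$ is not hereditarily regular.

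Finally, for $\theta<h_{S}$ when $S$ is hereditarily regular: by the equivalence just proved $\psi^{1}_{S}(\theta)=\infty$, so (a) and (d) give $P_{S}(t)=\infty$ for $t\le\theta$ and $P_{S}(t)\to\infty$ as $t\downarrow\theta$, whence $P_{S}>0$ on some interval $[0,\theta+\delta]$; combining this with the Mauldin--Urba\'{n}ski formula $h_{S}=\inf\{t\ge 0:P_{S}(t)\le 0\}$ (Bowen's formula for CIFSs) yields $h_{S}\ge\theta+\delta>\theta$. The step I expect to need the most care is showing that $P_{S}$ (and $P_{S'}$) blows up as $t\downarrow\theta$ — that is, facts (a) and (d), which is exactly where the Bounded Distortion Property is genuinely used — since this single mechanism drives both the intermediate-value argument in (2)$\Rightarrow$(1) and the strict inequality $\theta<h_{S}$.
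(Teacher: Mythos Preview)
The paper does not supply its own proof of this statement: Theorem~\ref{hregularequivalent} is quoted verbatim from Mauldin--Urba\'{n}ski \cite[Theorem~3.20]{MU}, as are the other results in Section~2, so there is no argument in the paper to compare against. Your proposal is essentially the standard Mauldin--Urba\'{n}ski proof and it is correct. The two-sided multiplicativity in your fact~(a), obtained from the chain rule and the Bounded Distortion Property, together with the monotone-convergence step~(d), are exactly the right ingredients; you deploy them correctly in both directions of the equivalence and in the final inequality. The only external input you invoke is the dimension formula $h_{S}=\inf\{t\ge 0:P_{S}(t)\le 0\}$, which in \cite{MU} is Theorem~3.15 and therefore precedes Theorem~3.20, so its use is legitimate. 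One small cosmetic remark: in the contrapositive direction you could avoid even mentioning $P_{S'}(t)$ for $t<\theta$, since what matters is only that $P_{S'}$ is $\le\log\epsilon<0$ on $[\theta,\infty)$ and hence has no zero anywhere in $[0,\infty)$ (it being $+\infty$ below $\theta$).
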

%
%
\begin{proposition}[\cite{MU} Lemma 3.13] \label{existenceofconformalmeasure}
Let $S$ be a CIFS. 
If $S$ is regular, then there exists the unique Borel probability measure $m_{S}$ on $X$ such that the following properties hold. 
\begin{enumerate}
\item $m_{S}(J_{S}) = 1$. 
\item For all Borel subset $A$ on $X$ and $i \in I$, $m_{S}(\phi_{i}(A)) = \int_{A}|\phi_{i}^{\prime}|^{h_{S}}  \mathrm{d} m_{S}$. 
\item For all $i, j \in I$ with $i \neq j$, $m_{S}(\phi_{i}(X) \cap \phi_{j}(X) ) = 0$. 
\end{enumerate}
\end{proposition}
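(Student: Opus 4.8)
Since this is \cite{MU}, Lemma 3.13, I will only outline the strategy I would follow. The plan is to realize $m_S$ as an eigenmeasure of the dual of the transfer (Ruelle) operator at the exponent $h_S=\dim_{\mathcal H}J_S$, and then to read off the three properties from the eigenmeasure identity. I would first record what regularity buys: $h_S$ is the unique $t\ge 0$ with $P_S(t)=0$, so in particular $\psi^1_S(h_S)=\sum_{i\in I}(\sup_X|\phi_i'|)^{h_S}<\infty$ and, by Definition \ref{PSD} together with Proposition \ref{logsubadditive}, $\frac1n\log\psi^n_S(h_S)\to 0$ as $n\to\infty$.

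For existence, I would introduce the transfer operator $\mathcal L\colon C(X)\to C(X)$, $(\mathcal Lf)(x):=\sum_{i\in I}|\phi_i'(x)|^{h_S}f(\phi_i(x))$. Convergence of $\psi^1_S(h_S)$, together with the Bounded Distortion Property and uniform contractivity, makes $\mathcal L$ a well-defined bounded positive operator, and the uniform smallness of the tails $\sum_{i\notin F}(\sup_X|\phi_i'|)^{h_S}$ makes the normalized dual map $\Lambda(\mu):=\mathcal L^*\mu/(\mathcal L^*\mu)(\mathbf 1)$ weak-$*$ continuous on the (weak-$*$ compact, convex) space of Borel probability measures on $X$. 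By the Schauder--Tychonoff fixed point theorem, $\Lambda$ has a fixed point $m=m_S$, i.e. $\mathcal L^*m=\lambda m$ for some $\lambda>0$. Iterating and integrating against $m$ gives $\lambda^n=\int_X\sum_{w\in I^n}|\phi_w'|^{h_S}\,\mathrm dm$, and the Bounded Distortion Property yields $K^{-h_S}\psi^n_S(h_S)\le\lambda^n\le\psi^n_S(h_S)$; letting $n\to\infty$ and using the first paragraph forces $\lambda=1$.

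For the three properties: from $\mathcal L^*m=m$ one gets, for every Borel $E\subset X$, $m(E)=\sum_{i\in I}\int_{\phi_i^{-1}(E\cap\phi_i(X))}|\phi_i'|^{h_S}\,\mathrm dm$. Applying this with $E=\phi_j(X)$ and isolating the term $i=j$ gives $\sum_j m(\phi_j(X))\ge 1$; separately, a standard argument --- using the open set condition, the cone condition, and the tail bound to control the ``escape set'' $X_S(\infty)$ of Definition \ref{Xinftydefinition} --- shows $m\big(X\setminus\bigcup_j\phi_j(X)\big)=0$, hence $\sum_j m(\phi_j(X))=m\big(\bigcup_j\phi_j(X)\big)=1$. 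That equality is possible only if the sets $\phi_j(X)$ are pairwise $m$-essentially disjoint, which is property (3); the off-diagonal terms in the displayed identity then vanish, giving (2) for one-letter words, and an induction on word length (using $\mathrm{diam}\,\phi_w(X)\le c^{|w|}\mathrm{diam}\,X$) promotes (2) to all $i\in I$ and all Borel $A$. Finally, since $J_S=\bigcup_{i\in I}\phi_i(J_S)$, iterating this and invoking the essential disjointness shows $m\big(\bigcup_{|w|=n}\phi_w(X)\big)=1$ for every $n$, whence $m(J_S)=1$, which is (1). (Alternatively, one may build $m$ from the outset as the $\pi$-image of a conformal measure on $I^\infty$, making (1) automatic.)

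For uniqueness, if $m'$ is another measure satisfying (1)--(3), then (2) forces $m'(\phi_w(X))=\int_X|\phi_w'|^{h_S}\,\mathrm dm'$ for every word $w$, and the Bounded Distortion Property makes $m$ and $m'$ comparable with uniformly bounded ratio on every cylinder $\phi_w(X)$; since these cylinders generate the Borel $\sigma$-algebra of $J_S$ and shrink to points, a density (martingale) argument gives $m'=m$. The step I expect to be the main obstacle is dealing with the genuinely infinite index set $I$: the uniform tail estimate $\sum_{i\notin F}(\sup_X|\phi_i'|)^{h_S}\to 0$ is exactly what is needed to make the dual operator continuous, to make the escape set $X_S(\infty)$ $m$-null, and to ensure $\bigcup_{|w|=n}\phi_w(X)$ carries full mass; and identifying the eigenvalue $\lambda$ with $e^{P_S(h_S)}=1$ via the Bounded Distortion Property is the other point where regularity is used in an essential way.
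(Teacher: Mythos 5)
The paper does not prove this proposition: it is imported verbatim as \cite{MU}, Lemma 3.13, so there is no in-paper argument to compare against. Your outline follows the strategy of that cited source (transfer operator $\mathcal L$ at the exponent $h_S$, a Schauder--Tychonoff fixed point of the normalized dual, identification of the eigenvalue with $e^{P_S(h_S)}=1$ via the Bounded Distortion Property and $\tfrac1n\log\psi^n_S(h_S)\to 0$, then a cylinder/density argument for uniqueness), and those parts are sound as a sketch.

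One step, however, is a non sequitur as written: you derive property (3) by claiming that $m\bigl(X\setminus\bigcup_j\phi_j(X)\bigr)=0$ gives $\sum_j m(\phi_j(X))=m\bigl(\bigcup_j\phi_j(X)\bigr)=1$. Full measure of the union only yields $\sum_j m(\phi_j(X))\geq m\bigl(\bigcup_j\phi_j(X)\bigr)=1$, which you already had from the eigenmeasure identity; it does not give the reverse inequality $\sum_j m(\phi_j(X))\leq 1$ that is needed to force the pairwise essential disjointness. The input that actually closes this is the Open Set Condition: the sets $\phi_j(\inter(X))$ are genuinely pairwise disjoint, so $\sum_j m(\phi_j(\inter(X)))\leq 1$ automatically, and the remaining work is to show $m(\phi_j(X)\setminus\phi_j(\inter(X)))=0$, i.e.\ that the conformal measure does not charge the images of $\partial X$ (this is where the cone condition and the conformal scaling relation are used in \cite{MU}). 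Once that boundary-nullity is established, the squeeze $\sum_j\int_X|\phi_j'|^{h_S}\,\mathrm dm\leq\sum_j m(\phi_j(X))=\sum_j m(\phi_j(\inter(X)))\leq 1=\sum_j\int_X|\phi_j'|^{h_S}\,\mathrm dm$ forces equality throughout, which simultaneously yields (2) for one-letter words and (3); the rest of your outline then goes through. You should make this boundary step explicit rather than folding it into the treatment of $X_S(\infty)$, since it is logically independent of the escape-set estimate.
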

We call $m_{S}$  the  $h_{S}$-conformal measure of $S$. 
\begin{theorem}[\cite{MU} Theorem 4.9] \label{Hausdorffmeasureestimate}
Let $S$ be a regular CIFS and $m_{S}$ be the $h_{S}$-conformal measure of $S$. 
We set $r_{0} := \mathrm{dist}(X, \partial V)$. 
If there exist a sequence of $\{z_{j}\}_{j=1}^{\infty}$ in $ X_{S}(\infty) $ and a sequence $\{r_{j}\}_{j=1}^{\infty}$ in $(0, r_{0})$ such that 
\[ \limsup_{j \to \infty} \frac{m_{S}(B(z_{j}, r_{j}))}{r_{j}^{h_{S}}} = \infty, \]
then we have $\mathcal{H}^{h_{S}}(J_{S}) = 0$. 
\end{theorem}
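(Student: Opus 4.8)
\medskip
\noindent\textbf{Proof strategy.}
Write $h:=h_{S}$ and $m:=m_{S}$, and recall that $\mathcal{H}^{h}(J_{S})=0$ if and only if the Hausdorff content $\mathcal{H}^{h}_{\infty}(J_{S})$ (the infimum of $\sum_{i}(\mathrm{diam}\,U_{i})^{h}$ over all countable covers, with no size restriction) vanishes. So the plan is: along the subsequence of indices $j$ for which $m(B(z_{j},r_{j}))/r_{j}^{h}\to\infty$, build an explicit countable cover of $J_{S}$ whose $h$-cost is $O(r_{j}^{h}/p_{j})$ for a quantity $p_{j}$ defined below, and then show $p_{j}/r_{j}^{h}\to\infty$.

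First I would fix such a $j$ and set $F_{j}:=\{\,i\in I\mid \phi_{i}(X)\not\subset B(z_{j},r_{j})\,\}$. Since $X_{S}(\infty)\neq\emptyset$ forces $I$ to be infinite and then regularity forces $\psi^{1}_{S}(h)<\infty$ (use Theorem \ref{hregularequivalent}), one gets $\|\phi_{i}'\|\to0$ and hence $\mathrm{diam}\,\phi_{i}(X)\to0$ as $i\to\infty$; combined with the defining property of $z_{j}\in X_{S}(\infty)$ (Definition \ref{Xinftydefinition}) this makes $\phi_{i}(X)\subset B(z_{j},r_{j})$ for all large $i$ in a suitable infinite subset of $I$, so that $F_{j}\subsetneq I$ is a \emph{proper} (possibly infinite) index set. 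For every finite word $w$ over $F_{j}$, including the empty word, put $E_{w}:=\phi_{w}\big(\bigcup_{i\in I\setminus F_{j}}\phi_{i}(J_{S})\big)$, which lies in $\phi_{w}(B(z_{j},r_{j}))$. Splitting each $\pi(\omega)\in J_{S}$ according to the first time the coding $\omega$ leaves $F_{j}$ writes $J_{S}$ as the union of all the $E_{w}$ together with $J^{(j)}:=\bigcap_{n}\bigcup_{w\in F_{j}^{n}}\phi_{w}(X)$, the limit set of the subsystem $\{\phi_{i}\}_{i\in F_{j}}$; the $E_{w}$ overlap in $m$-null sets by the open set condition. Because $F_{j}\subsetneq I$, a routine computation with the conformal measure (Proposition \ref{existenceofconformalmeasure}) shows $\sum_{w\in F_{j}^{n}}m(\phi_{w}(X))$ decays geometrically in $n$, hence $m(J^{(j)})=0$ and $\mathcal{H}^{h}(J^{(j)})=0$, so $J^{(j)}$ may be dropped from the cover.

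The cost of the cover is then controlled by (BDP) and conformality (Definition \ref{CIFSDef}): with $K$ the distortion constant one has $\mathrm{diam}(E_{w})\lesssim K\|\phi_{w}'\|\,r_{j}$, and, writing $p_{j}:=\sum_{i\in I\setminus F_{j}}m(\phi_{i}(X))=m\big(\bigcup_{i\notin F_{j}}\phi_{i}(J_{S})\big)$, also $m(E_{w})\ge K^{-h}\|\phi_{w}'\|^{h}p_{j}$ by the transformation rule for $m$. Therefore $\mathrm{diam}(E_{w})^{h}\lesssim (r_{j}^{h}/p_{j})\,m(E_{w})$, and summing over $w$ and using that the $E_{w}$ are $m$-almost disjoint inside the probability space $(X,m)$ gives
\[
\mathcal{H}^{h}_{\infty}(J_{S})\ \lesssim\ \frac{r_{j}^{h}}{p_{j}}.
\]
Hence the whole theorem reduces to showing $p_{j}/r_{j}^{h}\to\infty$ along the chosen subsequence.

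That last step is where I expect the real difficulty to lie. If the $m$-mass of $B(z_{j},r_{j})$ were carried, up to a fixed fraction, by the first-level cylinders that genuinely fit inside $B(z_{j},r_{j})$ — that is, $p_{j}\gtrsim m(B(z_{j},r_{j}))$ — the conclusion would be immediate, since $m(B(z_{j},r_{j}))/r_{j}^{h}\to\infty$ by hypothesis; and one does get this when $z_{j}$ is not buried inside a few unusually large cylinders. The troublesome case is that most of the mass near $z_{j}$ sits in finitely many large cylinders $\phi_{i}(X)$, $i\in F_{j}$, that protrude from $B(z_{j},r_{j})$. To handle it I would exploit that the normalized density $m(B(\cdot,\rho))/\rho^{h}$ is invariant, up to the constant $K$, under each conformal map $\phi_{w}$: pulling the picture back through such an offending cylinder $\phi_{i_{0}}$ (with $z_{j}\in\phi_{i_{0}}(X)$) rescales the radius by $\|\phi_{i_{0}}'\|^{-1}$ while preserving the density blow-up, and after a bounded number of such pull-backs one either lands at a scale where the mass is no longer concentrated in large cylinders, reducing to the easy case, or overshoots the admissible range $(0,r_{0})$, which is impossible. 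The part needing genuine care — and the main gap in this plan — is verifying that each pull-back keeps the re-centered configuration admissible (the new exceptional set still proper, the new radius still below $r_{0}$, the new centre still an accumulation point of cylinders), so that the cover construction can recur; granting this, $\mathcal{H}^{h}_{\infty}(J_{S})\le C\,r_{j}^{h}/p_{j}\to0$, so $\mathcal{H}^{h}(J_{S})=0$.
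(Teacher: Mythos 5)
First, an important caveat: the paper does not prove this statement at all --- it is quoted verbatim as Theorem 4.9 of Mauldin--Urba\'{n}ski \cite{MU} and used as a black box --- so your attempt has to be measured against the argument in \cite{MU}, not against anything in this paper.

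Your construction up to the bound $\mathcal{H}^{h}_{\infty}(J_{S})\lesssim r_{j}^{h}/p_{j}$ is sound: the first-exit decomposition over $F_{j}$, the properness of $F_{j}$ (via $z_{j}\in X_{S}(\infty)$ and $\mathrm{diam}\,\phi_{i}(X)\to 0$), the geometric decay of $\sum_{w\in F_{j}^{n}}m(\phi_{w}(X))$ for a proper subsystem (which, together with $\Vert\phi_{w}'\Vert^{h}\le K^{h}m(\phi_{w}(X))$, does give $\mathcal{H}^{h}(J^{(j)})=0$), and the distortion estimates on $\mathrm{diam}(E_{w})$ and $m(E_{w})$ are all correct. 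The gap is exactly where you flag it, and it is fatal as the plan stands: the hypothesis controls $m(B(z_{j},r_{j}))/r_{j}^{h}$, whereas your bound needs $p_{j}/r_{j}^{h}\to\infty$, and $p_{j}\le m(B(z_{j},r_{j}))$ is an inequality in the \emph{wrong} direction --- nothing prevents almost all of the mass of $B(z_{j},r_{j})$ from sitting in a few large first-level cylinders that straddle the boundary of the ball, with the inscribed cylinders carrying only $O(r_{j}^{h})$. The recursive repair you sketch does not close this: each pull-back through an offending cylinder multiplies the radius by at least $\Vert\phi_{i_{0}}'\Vert^{-1}>1$, so the number of pull-backs available before leaving the admissible range is of order $\log(1/r_{j})$, not bounded, while each pull-back costs a fixed distortion factor $K^{-2h}$ in the density; the blow-up $\theta_{j}:=m(B(z_{j},r_{j}))/r_{j}^{h}\to\infty$ can therefore be consumed before the recursion terminates, and the assertion that overshooting $(0,r_{0})$ ``is impossible'' is not justified.

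The argument in \cite{MU} uses your two correct ingredients but transports the density \emph{forward} rather than trying to bound $p_{j}$ from below. For every word $w\in I^{*}$ one has $m\bigl(B(\phi_{w}(z_{j}),K\Vert\phi_{w}'\Vert r_{j})\bigr)\ge K^{-2h}\theta_{j}\,\bigl(K\Vert\phi_{w}'\Vert r_{j}\bigr)^{h}$, so every $x=\pi(\omega)\in J_{S}$ whose orbit $\pi(\sigma^{n}\omega)$ enters $B(z_{j},r_{j})$ infinitely often satisfies $\limsup_{r\to 0}m(B(x,r))/r^{h}\ge c\,\theta_{j}$ for a constant $c$ depending only on $K$ and $h$. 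The exceptional points lie in a countable union of images $\phi_{w}$ of the limit set of a proper subsystem (namely the words eventually avoiding the infinitely many indices $i$ with $\phi_{i}(X)\subset B(z_{j},r_{j})$), and that limit set has zero $\mathcal{H}^{h}$-measure by precisely your geometric-decay computation. The standard converse density theorem --- if $\limsup_{r\to 0}m(B(x,r))/r^{h}\ge\lambda$ for all $x$ in a set $A$, then $\mathcal{H}^{h}(A)\le C/\lambda$ --- then yields $\mathcal{H}^{h}(J_{S})\le C/(c\,\theta_{j})\to 0$. In short: the correct use of the hypothesis is ``the upper $h$-density of $m$ blows up on a co-$\mathcal{H}^{h}$-negligible subset of $J_{S}$,'' not ``the inscribed first-level cylinders carry a definite fraction of the mass of the ball,'' and to salvage your covering approach you would have to prove a lower bound of the latter kind, which is the unproved (and possibly false) step.
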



\begin{theorem}[\cite{MU} Lemma 4.3] \label{packingmeasureestimate}
Let $S$ be a regular CIFS. 
If $J_{S} \cap \inter(X) \neq \emptyset$, then we have $\mathcal{P}^{h_{S}}(J_{S}) > 0$. 
\end{theorem}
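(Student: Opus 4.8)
The plan is to reduce the statement to a density estimate for the $h_{S}$-conformal measure $m_{S}$ of $S$, which exists by Proposition~\ref{existenceofconformalmeasure}. I will use the standard lower bound for packing measure: if $A\subseteq J_{S}$ is Borel with $m_{S}(A)>0$ and there is a constant $c_{1}\in(0,\infty)$ with $\liminf_{r\to 0^{+}}m_{S}(B(x,r))/r^{h_{S}}\le c_{1}$ for every $x\in A$, then, by the Vitali covering theorem for Radon measures together with the definition of $\mathcal{P}^{h_{S}}$ through countable decompositions, $\mathcal{P}^{h_{S}}(J_{S})\ge\mathcal{P}^{h_{S}}(A)\ge c_{1}^{-1}m_{S}(A)>0$. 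So it is enough to produce such a set $A$; I expect $A=J_{S}$ up to an $m_{S}$-null set to work.

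First I would use the hypothesis to fix a finite word placed well inside $X$. Pick $\xi=\pi(\omega)\in J_{S}\cap\inter(X)$; since $\phi_{\omega|_{n}}$ is $c^{n}$-Lipschitz, where $c\in(0,1)$ is the uniform contraction constant of $S$, and $\xi\in\phi_{\omega|_{n}}(X)$ for all $n$, for $n$ large $\phi_{\omega|_{n}}(X)\subseteq\inter(X)$. Put $\eta:=\omega|_{n}$ and $\rho_{0}:=\min\bigl(\mathrm{dist}(\phi_{\eta}(X),\partial X),\ \mathrm{dist}(X,\partial V)\bigr)>0$; by Proposition~\ref{existenceofconformalmeasure}(2), $p:=m_{S}(\phi_{\eta}(J_{S}))=\int_{J_{S}}|\phi_{\eta}'|^{h_{S}}\,\mathrm{d}m_{S}\ge\bigl(\inf_{z\in X}|\phi_{\eta}'(z)|\bigr)^{h_{S}}>0$. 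Now suppose $x\in J_{S}$ is coded by some $\nu\in I^{\infty}$ in which the word $\eta$ occurs beginning at position $k+1$, and write $w:=\nu|_{k}$; then $x=\phi_{w}(\zeta)$ with $\zeta\in\phi_{\eta}(J_{S})$, so $\mathrm{dist}(\zeta,\partial X)\ge\rho_{0}$ and $B(\zeta,\rho_{0})\subseteq X\subseteq V$. By the standard consequences of BDP and conformality (cf.\ \cite{MU}) there is $c_{0}=c_{0}(K,d)>0$ with $B\bigl(x,\,c_{0}\rho_{0}\inf_{z\in X}|\phi_{w}'(z)|\bigr)\subseteq\phi_{w}(X)$. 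Hence, putting $r_{k}:=c_{0}\rho_{0}\inf_{z\in X}|\phi_{w}'(z)|$ (which $\to 0$ as $k\to\infty$ since $\sup_{X}|\phi_{w}'|\le c^{|w|}$) and using Proposition~\ref{existenceofconformalmeasure}(2) with BDP,
\[ m_{S}(B(x,r_{k}))\ \le\ m_{S}(\phi_{w}(X))\ =\ \int_{J_{S}}|\phi_{w}'|^{h_{S}}\,\mathrm{d}m_{S}\ \le\ \Bigl(\sup_{z\in X}|\phi_{w}'(z)|\Bigr)^{h_{S}}\ \le\ c_{1}\,r_{k}^{h_{S}}, \]
where $c_{1}:=(K/(c_{0}\rho_{0}))^{h_{S}}$ does not depend on $x$ or $k$.

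It remains to show that $\eta$ occurs infinitely often in the coding of $m_{S}$-a.e.\ $x\in J_{S}$; combined with the previous paragraph this exhibits $A:=\{x\in J_{S}:\liminf_{r\to 0}m_{S}(B(x,r))/r^{h_{S}}\le c_{1}\}$ as a set of full $m_{S}$-measure, and the first paragraph then finishes the proof. For this one uses that codings are $m_{S}$-a.e.\ unique (the overlaps in Proposition~\ref{existenceofconformalmeasure}(3), and their images under every $\phi_{w}$, are $m_{S}$-null) and that, by Proposition~\ref{existenceofconformalmeasure}(2) and BDP,
\[ \frac{m_{S}(\phi_{w\eta}(J_{S}))}{m_{S}(\phi_{w}(J_{S}))}\ \ge\ \frac{\bigl(\inf_{z\in X}|\phi_{w}'(z)|\bigr)^{h_{S}}\,p}{\bigl(\sup_{z\in X}|\phi_{w}'(z)|\bigr)^{h_{S}}}\ \ge\ K^{-h_{S}}p\ =:\ q\ >\ 0\qquad(w\in I^{*}). \]
Reading $m_{S}(\phi_{w\eta}(J_{S}))/m_{S}(\phi_{w}(J_{S}))$ as the conditional probability that the coding spells $\eta$ in positions $|w|+1,\dots,|w|+|\eta|$ given its first $|w|$ letters, and iterating this uniform lower bound along the disjoint blocks of positions $[K+j|\eta|+1,\,K+(j+1)|\eta|]$, $j=0,1,2,\dots$, the probability that $\eta$ never occurs after position $K$ is at most $(1-q)^{m}$ for every $m$, hence is $0$; letting $K\to\infty$ gives the claim.

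The step I expect to be the real obstacle is recognizing that $m_{S}(B(x,r))/r^{h_{S}}$ cannot be controlled uniformly in $r$: for essentially every $x$ it is unbounded as $r\to 0$, which is precisely the mechanism — balls capturing the infinitely many cylinders that accumulate on $X_{S}(\infty)$ — responsible for the vanishing of $\mathcal{H}^{h_{S}}$ in Theorem~\ref{Hausdorffmeasureestimate}. The way out is that for packing measure only the $\liminf$ is relevant, and the interior hypothesis $J_{S}\cap\inter(X)\ne\emptyset$ supplies a single fixed ``round'' cylinder $\phi_{\eta}(X)$ at positive distance from $\partial X$, hence automatically away from $X_{S}(\infty)$, at whose scales the conformality identity of Proposition~\ref{existenceofconformalmeasure} yields exactly $m_{S}(B(x,r))\lesssim r^{h_{S}}$; the recurrence estimate then guarantees that $m_{S}$-typical points meet these good scales infinitely often. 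Granting this, the supporting facts — the Koebe-type inclusion $B\bigl(\phi_{w}(\zeta),\,c_{0}\rho_{0}\inf_{z\in X}|\phi_{w}'(z)|\bigr)\subseteq\phi_{w}(X)$ for deep-interior $\zeta$, and the Vitali-based packing density bound — are routine.
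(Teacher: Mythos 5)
The paper does not prove this statement: it is quoted directly from Mauldin--Urba\'{n}ski \cite{MU} (their Lemma 4.3), so there is no in-paper proof to compare yours against. Your argument is correct and is essentially the standard proof from \cite{MU}: fix a cylinder $\phi_{\eta}(X)$ compactly contained in $\inter(X)$, use the Koebe-type inclusion $B\bigl(\phi_{w}(\zeta),K^{-1}\rho_{0}\inf_{X}|\phi_{w}'|\bigr)\subseteq\phi_{w}(X)$ together with $m_{S}(\phi_{w}(X))\le\bigl(\sup_{X}|\phi_{w}'|\bigr)^{h_{S}}$ to bound the lower density of $m_{S}$ at every point whose coding contains $\eta$ infinitely often, show that such points form a set of full (in particular positive) $m_{S}$-measure, and finish with the converse density theorem for packing measures (\cite{MU}, Lemma 4.2). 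The only point at which you deviate from \cite{MU} is the recurrence step: there the a.e.\ occurrence of $\eta$ is extracted from the ergodicity of the shift-invariant measure equivalent to $m_{S}\circ\pi$, whereas you run a direct conditional-measure estimate on cylinders; your uniform bound $m_{S}(\phi_{w\eta}(J_{S}))/m_{S}(\phi_{w}(J_{S}))\ge K^{-h_{S}}p$ is exactly the right quantity to iterate, and the resulting $(1-q)^{m}$ bound closes the argument, so I see no gap.
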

\section{CIFSs of generalized complex continued fractions}
In this section, we prove some properties of the CIFSs of generalized complex continued fractions (\cite{IOS}). 
Note that they are important and interesting examples of infinite CIFSs. 
We introduce some additional notations. 
For each $\tau \in A_{0}$, we set $\pi_{\tau} := \pi_{S_{\tau}}$, $\theta_{\tau} := \theta_{S_{\tau}}$, $\psi_{\tau}^{n}(t):= \psi_{S_{\tau}}^{n}(t) \ (t \geq 0, n \in \mathbb{N})$, $P_{\tau}(t) := P_{S_{\tau}}(t) $ \ $(t \geq 0)$ and $X_{\tau}(\infty) := X_{S_{\tau}}(\infty)$. 

In order to prove the main result, we need the following lemmas \ref{GCCFisCIFS} $\sim$ \ref{Xinftythorem} which were shown in \cite{IOS}. 
For the readers, we give the proofs. 
\begin{lemma}\label{GCCFisCIFS}
For all $\tau \in A_{0}$, $S_{\tau}$ is a CIFS. 
\end{lemma}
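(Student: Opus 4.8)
The plan is to verify each of the seven defining conditions of a CIFS for $S_\tau = \{\phi_b : X \to X \mid b \in I_\tau\}$, where $\phi_b(z) = 1/(z+b)$ and $X = \{z \in \mathbb{C} \mid |z-1/2| \le 1/2\}$. First I would record some elementary facts about the geometry: for $z \in X$ one has $\mathrm{Re}(z) \ge 0$ and $|z| \le 1$, so that for $b = m + n\tau$ with $m,n \in \mathbb{N}$ and $\tau \in A_0$ (hence $\mathrm{Re}(\tau) \ge 0$, $\mathrm{Im}(\tau) \ge 1$) we have $\mathrm{Re}(z+b) \ge \mathrm{Re}(b) \ge 1 > 0$ and $\mathrm{Im}(z+b) \ge \mathrm{Im}(z) + n\,\mathrm{Im}(\tau) \ge 0 + 1 = 1$; in particular $z + b \ne 0$, so each $\phi_b$ is well-defined and holomorphic on a neighborhood of $X$. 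That $\phi_b$ maps $X$ into $X$ is the classical fact that $z \mapsto 1/z$ sends the half-plane $\{\mathrm{Re}(w) \ge 1\} \supset z+b$ into the disk $X$ (the image of the line $\mathrm{Re}(w)=1$ under inversion is the circle $|z-1/2|=1/2$, and the half-plane maps inside); I would just check this directly by computing $|1/w - 1/2| \le 1/2 \iff \mathrm{Re}(w) \ge 1$.

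Next I would dispatch the conditions in turn. Injectivity is immediate since each $\phi_b$ is a Möbius transformation. For conformality and the $C^{1+\epsilon}$-extension, I would fix an open connected $V$ with $X \subset V$ on which all $\phi_b$ are defined and holomorphic — e.g. $V = \{w \mid \mathrm{Re}(w) > -1/4\}$ composed appropriately, or more concretely a slightly enlarged disk — using that the poles $-b$ stay a definite distance away from $X$ uniformly in $b$ (since $\mathrm{Im}(-b) \le -1$); holomorphic maps are conformal with any $\epsilon \in (0,1]$. For the cone condition on $X$: $X$ is a closed disk, so at each boundary point one can inscribe an open cone (indeed an open disk) inside $\inter(X)$; this is purely about $X$ and independent of $\tau$. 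For uniform contractivity: $\phi_b'(z) = -1/(z+b)^2$, and since $|z+b| \ge \mathrm{Re}(z+b) \ge \mathrm{Re}(b) \ge 1$ we get $|\phi_b'(z)| \le 1$ for all $b$, which is not quite a uniform $c < 1$; to fix this I would note that the smallest value of $|b|$ over $b \in I_\tau$ is $|1 + \tau|$ and in fact $|z+b| \ge |1+\tau|$ or at least $|z+b|^2 \ge \mathrm{Re}(b)^2 + \mathrm{Im}(b)^2 \ge 1 + 1 = 2$ when $n \ge 1$ (using $\mathrm{Im}(b) \ge \mathrm{Im}(\tau) \ge 1$), giving $|\phi_b'(z)| \le 1/2 =: c < 1$ uniformly. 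For OSC: $\phi_b(\inter X) \subset \inter X$ since $\phi_b$ is an open map sending $X$ into $X$; for disjointness of $\phi_b(\inter X)$ and $\phi_{b'}(\inter X)$ when $b \ne b'$, I would argue via the inverse branches — $\phi_b^{-1}(z) = 1/z - b$ — observing that $\phi_b(X)$ and $\phi_{b'}(X)$ are contained in disjoint vertical strips or, more carefully, that a point in both would force $1/w - b = 1/w' - b'$ impossible structure; the cleanest route is that $\phi_b(X)$ lies in the region $\{1/(z+b) \mid z \in X\}$, and pulling back by inversion, $1/\phi_b(X) = X + b$, and the translates $X + b$, $b \in I_\tau$, have pairwise disjoint interiors because the lattice-like set $\{m + n\tau\}$ spaces them out by at least $1$ in the real direction and by at least $\mathrm{Im}(\tau) \ge 1$ in the imaginary direction while $X$ has diameter $1$ — here I would need the mild inequality ensuring the translates of the unit-diameter disk by $I_\tau$ don't overlap, which holds since consecutive lattice points differ by $1$ (in $m$) and $X+b$, $X+b+1$ are closed disks of diameter $1$ centered at points distance $1$ apart, hence meet only on their boundary.

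For the Bounded Distortion Property I would use the standard fact for conformal (holomorphic) maps in the plane: BDP for each single $\phi_b$ with a constant depending only on $\mathrm{dist}(X, \partial V)$ and $\mathrm{diam}(X)$ via the Koebe distortion theorem or a direct estimate on $|\phi_b'(x)/\phi_b'(y)| = |z+b-\text{something}|$, and then the key point that the constant can be taken \emph{uniform} over $b \in I_\tau$ because the family $\{\phi_b\}$ has uniformly bounded distortion — each $\phi_b$ extends univalently to a fixed neighborhood of $X$ with the pole $-b$ uniformly far from $X$. The composition rule $\phi_w = \phi_{w_1} \circ \cdots \circ \phi_{w_n}$ then inherits the same constant $K$ by the chain rule and telescoping, because each $\phi_{w_i}$ maps $V$-neighborhoods into themselves with room to spare (contraction by $c < 1$ absorbs the loss); this is exactly the argument in \cite{MU} that BDP for the generators plus uniform contraction yields BDP for all finite compositions, so I would cite or reproduce that.

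The main obstacle I anticipate is the Bounded Distortion Property — specifically, getting a \emph{single} constant $K$ valid simultaneously for all words $w \in I_\tau^*$ of all lengths, uniformly in $\tau \in A_0$. The subtlety is that $I_\tau$ is infinite and the maps $\phi_b$ with $|b|$ large are very strong contractions whose images are tiny, so one must ensure the derivative-ratio bound does not degenerate; the resolution is that strong contraction actually \emph{helps} (the image $\phi_b(X)$ has tiny diameter relative to its distance from the singularities of the next map), and the Koebe estimate on a fixed-size neighborhood of $X$ closes the argument. The cone condition, injectivity, conformality, and contractivity are all routine; the OSC disjointness requires only the elementary observation about translates of $X$ by the set $I_\tau$, which uses $\mathrm{Re}(\tau) \ge 0$ and $\mathrm{Im}(\tau) \ge 1$ in an essential but easy way.
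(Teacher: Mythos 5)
Your proposal follows essentially the same route as the paper: a condition-by-condition verification, using the inversion picture $\phi_b = f^{-1}\circ g_b$ with $f(z)=1/z$ carrying $X$ onto the half-plane $\{\Re w\ge 1\}$ to get $\phi_b(X)\subset X$ and the open set condition (via disjointness of the translates $X+b$, which needs exactly $|b-b'|\ge 1$, supplied by $\Re\tau\ge 0$ and $\Im\tau\ge 1$), a lower bound on $|z+b|$ for uniform contractivity, and the Koebe distortion theorem on a slightly enlarged disk for the bounded distortion property. All of that matches the paper's proof in structure and in substance.

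There is one concrete error, though a repairable one: you repeatedly use $\Im(z)\ge 0$ for $z\in X$, which is false on the lower half of the disk ($\Im z$ ranges over $[-1/2,1/2]$). This gives you the incorrect bounds $\Im(z+b)\ge 1$ and, in the contractivity step, $|z+b|^2\ge \Re(b)^2+\Im(b)^2\ge 2$ (note also that $|z+b|^2$ is not $\Re(b)^2+\Im(b)^2$ plus nonnegative terms, precisely because $\Im z$ can be negative); indeed for $\tau=i$, $b=1+i$ and $z$ the lowest point of $X$ one gets $|z+b|^2<2$, so your claimed contraction constant $c=1/2$ is wrong. The correct estimate, as in the paper, is $(\Re z+m+nu)^2+(\Im z+nv)^2\ge 1^2+(1-1/2)^2=5/4$, giving $|\phi_b'(z)|\le 4/5$ and hence $c=4/5<1$; nothing downstream is affected. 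On the bounded distortion property, your telescoping argument for compositions is a legitimate alternative to what the paper does (the paper applies the Koebe theorem to each generator on the fixed disk $B(1/2,1/2+\epsilon)$, which it first shows is forward-invariant, so the same constant applies verbatim to every word $\phi_w$, these being univalent there); either route closes that step.
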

\begin{proof}
Let $\tau \in A_{0}$. Firstly, we show that for all $b \in I_{\tau}$, $\phi_{b}(X) \subset X$. 
Let $Y := \{ z \in \mathbb{C} |\ \Re z \geq 1 \}$ and let $f \colon \hat{\mathbb{C}} \to \hat{\mathbb{C}} $ be the M\"{o}bius transformation defined by $f(z) := 1/z$. 
Since $f(0) = \infty$, $f(1) = 1$, $f(1/2+i/2) = 2/(1+i) = (1-i)$, we have $f(\partial X) = \partial Y \cup \{ \infty \} $.
Moreover, since f(1/2) = 2, we have $f(X) = Y \cup \{ \infty \}$. 
Thus, $f \colon X \to Y \cup \{ \infty \}$ is a homeomorphism. 
Let $g_{b} \colon X \to Y$ be the map defined by $g_{b}(z) := z+b$. 
We deduce that $\phi_{b} = f^{-1} \circ g_{b}$ and $\phi_{b}(X) \subset f^{-1}(Y) \subset X$. 
Therefore, we have proved $\phi_{b}(X) \subset X$. 

We next show that for each $\tau \in A_{0}$, $S_{\tau}$ satisfies the conditions of Definition \ref{CIFSDef}. 

1. Injectivity. \\
Since each $\phi_{b}$ is a M\"{o}bius transformation, each $\phi_{b}$ is injective. 

2. Uniform Contractivity. \\
Let $b=m+n\tau (=m+nu+inv)$ be an element of $I_{\tau}$ and let $z=x+iy$ and $z^{\prime}=x^{\prime}+iy^{\prime}$ be elements of $X$. 
We have

\begin{align*}
|z+b|^{2}	&= |x+m+nu+i(y+nv)|^{2}\\
			&= (x+m+nu)^{2}+(y+nv)^{2} \geq (0+1+0)^{2} + (-1/2+1)^{2} = \frac{5}{4}. 
\end{align*}
Therefore, we deduce that $|z+b| \geq \sqrt{5/4} $. 
We also deduce that $|z^{\prime}+b| \geq \sqrt{5/4} $. 
Finally, we obtain that

\begin{align*}
|\phi_{b}(z)-\phi_{b}(z^{\prime})|	&= \left| \displaystyle \frac{1}{z+b} - \frac{1}{z^{\prime}+b} \right|\\
									&= \frac{|z-z^{\prime}|}{|z+b||z^{\prime}+b|} \leq \left( \sqrt{ \frac{4}{5} } \right)^{2} |z-z^{\prime}| = \frac{4}{5}|z-z^{\prime}|.
\end{align*}
Therefore, $S_{\tau}$ is uniformly contractive on $X$. 

3. Cone Condition. \\
Since $X$ is a closed disk, the Cone Condition is satisfied.

4. Open Set Condition. \\
Note that $\inter(X) = \{ z \in \mathbb{C} |\ | z - 1/2| < 1/2 \}$. 
Let $\tau \in A_{0}$ and let $b \in I_{\tau}$. 
Since $f(\partial X) = \partial Y \cup \{ \infty \} $, we deduce that for all $b \in I_{\tau}$, 
\[ g_{b}(\inter(X)) \subset \{ z=x+iy \in \mathbb{C} |\ x > 1 \} = f(\inter(X)). \]
Moreover, if $b$ and $b^{\prime}$ are distinct elements, then $g_{b}(\inter(X))$ and $g_{b^{\prime}}(\inter(X))$ are disjoint. 
Therefore, we have that for all $b \in I_{\tau}$, 
\[\phi_{b}(\inter(X)) = f^{-1}\circ g_{b}(\inter(X)) \subset f^{-1}\circ f(\inter(X)) = \inter(X). \]
And if $b$ and $b^{\prime}$ are distinct elements, 
\[\phi_{b}(\inter(X)) \cap \phi_{b^{\prime}}(\inter(X))	= f^{-1}(g_{b}(\inter(X)) \cap g_{b^{\prime}}(\inter(X))) = \emptyset. \]
Therefore, $S_{\tau}$ satisfies the Open Set Condition of $S_{\tau}$. 

5. Bounded Distortion Property. \\
Let $\epsilon$ be a positive real number which is less than $1/12$ and let $V^{\prime} := B(1/2, 1/2+\epsilon)$ be the open ball with center $1/2$ and radius $1/2+\epsilon$. We set $\tau := u +iv$. 
Then, for all $(m, n) \in \mathbb{N}^{2}$ and $z := x+iy \in V^{\prime}$, 
we have that 

\begin{align*}
|\phi_{m+n\tau}^{\prime}(z)|	&= \frac{1}{|z+m+n\tau|^{2}} = \frac{1}{(x+m+nu)^{2}+(y+nv)^{2}}\\
                                &\leq \frac{1}{(-\epsilon+1+0)^{2}+(-1/2-\epsilon+1)^{2}}\\
                                &= \frac{1}{2\epsilon^{2}-3\epsilon+5/4} = \frac{1}{2(\epsilon-3/4)^{2}+1/8}\\
                                &\leq \frac{1}{2(1/12-3/4)^{2}+1/8} = \frac{72}{73} < 1
\end{align*}
For each $z \in V^{\prime}$, we set 

\[z^{\prime} :=	\begin{cases}
					\displaystyle (|z-1/2|-\epsilon)\frac{(z-1/2)}{|z-1/2|} +1/2 & (z \notin X) \\
                    z & (z \in X).
                    \end{cases}
\]
Then, we have that $|z-z^{\prime}| \leq \epsilon$ and $|z^{\prime}-1/2| < 1/2$. 
It implies that $z^{\prime} \in X $. 
Thus, we obtain that $|\phi_{b}(z) - \phi_{b}(z^{\prime})| \leq (72/73) |z-z^{\prime}| < \epsilon$ and  
\[\left| \phi_{b}(z) - \frac{1}{2} \right|	\leq |\phi_{b}(z) - \phi_{b}(z^{\prime})| + \left| \phi_{b}(z^{\prime}) - \frac{1}{2} \right| < \frac{1}{2} + \epsilon. \]
It follows that for all $b \in I_{\tau}$, $\phi_{b}(V^{\prime}) \subset V^{\prime}$. 
In addition, $\phi_{b}$ is injective on $V^{\prime}$ and $\phi_{b}$ is holomorphic on $V^{\prime} := B(1/2, 1/2+\epsilon)$ since $\phi_{b}$ is holomorphic on $\mathbb{C} \setminus \{-b\}$. 

Let $b$ be an element of $I_{\tau}$ and $r_{0}:= 1/2 + \epsilon$.
Let $f_{b}$ be the function defined by 
\[f_{b}(z) := \frac{(\phi_{b}(r_{0}z+1/2) - \phi_{b}(1/2))}{r_{0}\phi_{b}^{\prime}(1/2)} \quad (z \in D:=\{z \in \mathbb{C} | |z| < 1\}). \]
Note that $f_{b}$ is holomorphic on $D$ and $f_{b}(0) = 0$ and $f_{b}^{\prime}(0) = 1$. 
By using the Koebe distortion theorem (For example, see \cite[Theorem 4.1.1]{MU3}), we deduce that for all $z \in D$, 
\[\frac{1-|z|}{(1+|z|)^{3}} \leq |f_{b}(z)| \leq \frac{1+|z|}{(1-|z|)^{3}}. \]
Let $r_{1} := (r_{0} +1/2)/2$. we deduce that there exist $C_{1} \geq 1$ and $C_{2} \leq 1$ such that for all $z \in B(0, r_{1}/r_{0}) (\subset D)$, 
\[C_{2} \leq \frac{1-|z|}{(1+|z|)^{3}} \quad \text{and} \quad \frac{1+|z|}{(1-|z|)^{3}} \leq C_{1}. \]
Let $C := C_{1}/C_{2}$. Then, we have that for all $z, z^{\prime} \in B(0, r_{1}/r_{0})$

\begin{align*}
\frac{|\phi_{b}^{\prime}(r_{0}z+1/2)|}{|\phi_{b}^{\prime}(1/2)|}	&= |f_{b}^{\prime}(z)| \leq \frac{1+|z|}{(1-|z|)^{3}} \\
                                                                    &\leq C_{1} = C C_{2} \leq C \frac{1-|z^{\prime}|}{(1+|z^{\prime}|)^{3}}\\
                                                                    &\leq C |f_{b}^{\prime}(z^{\prime})| \leq C \frac{|\phi_{b}^{\prime}(r_{0}z^{\prime}+1/2)|}{|\phi_{b}^{\prime}(1/2)|}. 
\end{align*}
It follows that for all $z, z^{\prime} \in B(0, r_{1}/r_{0})$, $|\phi_{b}^{\prime}(r_{0}z+1/2)| \leq C |\phi_{b}^{\prime}(r_{0}z^{\prime}+1/2)|$. 
Finally, let $V := B(1/2, r_{1})$ be the open ball with center $1/2$ and radius $r_{1}$. 
Then, $V$ is an open and connected subset of $\mathbb{C}$ with $X \subset V$
and for all $z, z^{\prime} \in V$, 
\[|\phi_{b}^{\prime}(z)| \leq C |\phi_{b}^{\prime}(z^{\prime})|. \]
Therefore, $S_{\tau}$ satisfies the Bounded Distortion Property. 

6. Conformality. \\
Let $\tau \in A_{0}$ and let $b \in I_{\tau}$. 
Since $\phi_{b}$ is holomorphic on $\mathbb{C} \setminus \{-b\}$, $\phi_{b}$ is $\mathrm{C}^{2}$ and conformal on $V$. 
By the above argument, we have $\phi_{b}(V) \subset V$. 
\end{proof}

For the rest of the paper, let $V := B(1/2, r_{1})$, where $r_{1}$ is the number in the proof of Lemma \ref{GCCFisCIFS}.  

\begin{lemma}[basic inequality]\label{basicestimate}
Let $\tau \in A_{0}$. 
Then, there exists $K_{0} \geq 1$ such that for all $K \geq K_{0}$ and $b \in I_{\tau}$, the following properties hold. 
\begin{enumerate}
\item $\phi_{b}(V) \subset B(0, K |b|^{-1})$. 
\item For each $z \in V$, $K^{-1} |b|^{-2} \leq |\phi_{b}^{\prime}(z)| \leq K |b|^{-2}$. 
\end{enumerate}
\end{lemma}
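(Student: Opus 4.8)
The plan is to deduce both assertions from a single uniform two-sided estimate for $|z+b|$. I claim there are constants $0<c_{1}\le 1\le c_{2}$, depending only on $\tau$ (in fact one may take them independent of $\tau\in A_{0}$), such that
\[ c_{1}|b|\ \le\ |z+b|\ \le\ c_{2}|b|\qquad\text{for all }z\in V\text{ and }b\in I_{\tau}. \]
Granting this, the lemma is essentially immediate, since $|\phi_{b}(z)|=|z+b|^{-1}$ and $|\phi_{b}'(z)|=|z+b|^{-2}$ for $z\in V$ (the norm of the derivative of a holomorphic map on an open subset of $\mathbb{C}$ being the modulus of its complex derivative, exactly as used in the proof of Lemma~\ref{GCCFisCIFS}).

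To establish the displayed estimate I would first record two elementary facts. Recall that $V=B(1/2,r_{1})$ with $r_{1}=1/2+\epsilon/2$ and $0<\epsilon<1/12$, so $\sup_{z\in V}|z|\le 1/2+r_{1}=1+\epsilon/2<25/24$. On the other hand, every $b\in I_{\tau}$ has the form $b=m+n\tau=(m+nu)+i(nv)$ with $m,n\in\mathbb{N}$, $u\ge 0$ and $v\ge 1$, whence $\Re b\ge 1$ and $\Im b\ge 1$, and therefore $|b|\ge\sqrt{2}$. Consequently $\rho:=(\sup_{z\in V}|z|)/\sqrt{2}<25/(24\sqrt{2})<1$, so that $|z/b|\le\rho<1$ for all $z\in V$ and $b\in I_{\tau}$. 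Writing $z+b=b(1+z/b)$ then yields $|z+b|=|b|\,|1+z/b|$ with $1-\rho\le|1+z/b|\le 1+\rho$, i.e. the estimate holds with $c_{1}:=1-\rho$ and $c_{2}:=1+\rho$.

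It then remains to read off the two assertions. From the estimate, $|\phi_{b}'(z)|=|z+b|^{-2}$ satisfies $c_{2}^{-2}|b|^{-2}\le|\phi_{b}'(z)|\le c_{1}^{-2}|b|^{-2}$ for every $z\in V$, while $|\phi_{b}(z)|=|z+b|^{-1}\le c_{1}^{-1}|b|^{-1}$, so $\phi_{b}(V)\subset B(0,c_{1}^{-1}|b|^{-1})$. Setting $K_{0}:=\max\{c_{1}^{-2},c_{2}^{2}\}$ (note $K_{0}\ge c_{1}^{-2}\ge c_{1}^{-1}$, since $c_{1}\le 1$), assertions (1) and (2) hold for $K=K_{0}$; and they persist for every $K\ge K_{0}$, because increasing $K$ only weakens each of the three inequalities $\phi_{b}(V)\subset B(0,K|b|^{-1})$, $|\phi_{b}'(z)|\le K|b|^{-2}$ and $|\phi_{b}'(z)|\ge K^{-1}|b|^{-2}$.

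There is no serious obstacle here; the one point requiring care — and really the crux of the matter — is that the separation ratio $\rho$ be genuinely uniform in $b$, that is, that $\sup_{z\in V}|z|$ be \emph{strictly} smaller than the uniform lower bound $\sqrt{2}$ for $\{\,|b|\ :\ b\in I_{\tau}\,\}$. This is exactly what is guaranteed by the choice $\epsilon<1/12$ made when $V$ was constructed in the proof of Lemma~\ref{GCCFisCIFS}, together with the bounds $\Re b\ge 1$ and $\Im b\ge 1$ valid for every $b\in I_{\tau}$ and every $\tau\in A_{0}$.
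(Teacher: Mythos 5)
Your proof is correct, but it takes a more elementary route than the paper for the derivative bounds. Both arguments rest on the same comparability $|z+b|\asymp|b|$ uniformly over $z\in V$ and $b\in I_{\tau}$; for assertion (1) the paper simply asserts the existence of an $M$ with $|b|\le M|b+z|$, whereas you actually exhibit the constant via the factorization $z+b=b(1+z/b)$ and the separation $\sup_{z\in V}|z|<25/24<\sqrt{2}\le|b|$ (using $\Re b\ge 1$, $\Im b\ge 1$, and $\epsilon<1/12$ in the construction of $V$), which is a genuine improvement in explicitness. The real divergence is in assertion (2): the paper invokes the Bounded Distortion Property (the constant $C$ coming from the Koebe distortion argument in the proof of Lemma \ref{GCCFisCIFS}) together with the evaluation $|\phi_{b}'(0)|=|b|^{-2}$, while you bypass BDP entirely by using the exact formula $|\phi_{b}'(z)|=|z+b|^{-2}$ and squaring the two-sided estimate. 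Your version is self-contained, yields explicit constants that are moreover uniform in $\tau\in A_{0}$ (a slightly stronger statement than the lemma claims), and avoids any appeal to distortion theory; the paper's version is shorter given that BDP has already been established, but is less transparent about where the constant comes from. All steps in your argument check out, including the verification that $K_{0}:=\max\{c_{1}^{-2},c_{2}^{2}\}\ge\max\{c_{1}^{-1},1\}$ and that the inequalities are monotone in $K$.
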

\begin{proof}
We use the notations in the proof of Lemma \ref{GCCFisCIFS}. 
Note that $r_{1} \in (1/2, 13/24)$. 
Let $\tau \in A_{0}$ and $b \in I_{\tau}$. 
Since there exists $M \in \mathbb{N}$ such that for all $z \in V = B(1/2, r_{1})$ and $b \in I_{\tau}$, we have that $|b| \leq M|b+z|$ , we deduce that 
\begin{equation} \label{basicinequality1}
|\phi_{b}(z)| \leq M|b|^{-1}. 
\end{equation}
Note that by using the BDP, there exists $C \geq 1$ such that for each $z \in V$, we have 
\begin{equation} \label{basicinequality2}
C^{-1}|\phi_{b}^{\prime}(0)| \leq |\phi_{b}^{\prime}(z)| \leq C |\phi_{b}^{\prime}(0)|. 
\end{equation}
We set $K_{0} := \max \{ C, M \} ( \geq 1)$. 
Let $K \geq K_{0}$. 

By the inequality (\ref{basicinequality1}), we deduce that $\phi_{b}(V) \subset B(0, K |b|^{-1})$. 
By the inequality (\ref{basicinequality2}) and the equality $|\phi_{b}^{\prime}(0)| = |b|^{-2}$, we deduce that for each $z \in V$, $K^{-1}|b|^{-2} \leq |\phi_{b}^{\prime}(z)| \leq K |b|^{-2}$. 
Therefore, we have proved our lemma. 
\end{proof}
\begin{lemma}\label{GCCFisahregularCIFS}
For all $\tau \in A_{0}$, $S_{\tau}$ is a hereditarily regular CIFS with $\theta_{\tau} = 1$. 
\end{lemma}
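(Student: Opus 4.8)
The plan is to compute $\theta_\tau$ explicitly and then invoke the equivalence in Theorem \ref{hregularequivalent} (hereditary regularity $\iff$ $\psi^1_S(\theta_S) = \infty$). The starting point is the formula $|\phi_b'(z)| = |z+b|^{-2}$ together with Lemma \ref{basicestimate}(2), which gives a constant $K_0 \geq 1$ with $K_0^{-1}|b|^{-2} \leq |\phi_b'(z)| \leq K_0|b|^{-2}$ for all $z \in V$ and $b \in I_\tau$. Consequently $\sup_{z\in X}|\phi_b'(z)| \asymp |b|^{-2}$, and so for $t \geq 0$,
\[
\psi^1_\tau(t) = \sum_{b \in I_\tau}\Bigl(\sup_{z\in X}|\phi_b'(z)|\Bigr)^t \asymp \sum_{b \in I_\tau}|b|^{-2t} = \sum_{m,n \in \mathbb{N}} |m + n\tau|^{-2t}.
\]
So the whole question reduces to determining for which $t$ the series $\sum_{m,n\geq 1}|m+n\tau|^{-2t}$ converges.

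Next I would carry out the convergence analysis of $\sum_{m,n\geq 1}|m+n\tau|^{-2t}$. Writing $\tau = u + iv$ with $u \geq 0$, $v \geq 1$, one has $|m+n\tau|^2 = (m+nu)^2 + (nv)^2$, and since $m, n \geq 1$ and $u \geq 0$ this is comparable to $m^2 + n^2$ (more precisely $|m+n\tau| \asymp_\tau \max\{m,n\} \asymp_\tau \sqrt{m^2+n^2}$, using $v \geq 1$ for the lower bound and $u \geq 0$ fixed for the upper bound). Hence $\psi^1_\tau(t) \asymp_\tau \sum_{m,n\geq 1}(m^2+n^2)^{-t}$, which by comparison with the integral $\int_1^\infty\!\int_1^\infty (x^2+y^2)^{-t}\,dx\,dy$ (polar coordinates) converges precisely when $2t > 2$, i.e. $t > 1$, and diverges when $t \leq 1$. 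Therefore $\theta_\tau = \inf\{t \geq 0 : \psi^1_\tau(t) < \infty\} = 1$, and moreover $\psi^1_\tau(\theta_\tau) = \psi^1_\tau(1) = \infty$. By Theorem \ref{hregularequivalent}, $S_\tau$ is hereditarily regular; this also re-confirms (via the last sentence of that theorem) that $\theta_\tau = 1 < h_\tau$.

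I expect the only real work to be the two-variable series estimate, and the one point that needs care is the uniformity in the comparison constants as $(m,n)$ ranges over all of $\mathbb{N}^2$: the lower bound $|m+n\tau|^2 = (m+nu)^2 + n^2v^2 \geq n^2 v^2 \geq n^2$ and $(m+nu)^2 \geq m^2$ give $|m+n\tau|^2 \geq \tfrac12(m^2+n^2)$, while the upper bound $(m+nu)^2 + n^2v^2 \leq 2m^2 + (2u^2+v^2)n^2 \leq C(\tau)(m^2+n^2)$ uses that $\tau$ is fixed. These bounds are clean and do not degenerate on the parameter region $A_0$, so no genuine obstacle arises; the lemma follows once the elementary series/integral comparison is recorded.
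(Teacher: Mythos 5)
Your proposal is correct, and its overall skeleton is the same as the paper's: reduce everything to the series $\sum_{b\in I_{\tau}}|b|^{-2t}$ via the two-sided bound $\sup_{z\in X}|\phi_{b}'(z)|\asymp|b|^{-2}$ (which the paper likewise takes from Lemma \ref{basicestimate}), show this series converges exactly for $t>1$, conclude $\theta_{\tau}=1$ with $\psi_{\tau}^{1}(1)=\infty$, and invoke Theorem \ref{hregularequivalent}. The only real divergence is in how the convergence threshold is established. The paper partitions $I_{\tau}$ into dyadic blocks $K(p)=\{m+n\tau : m,n<2^{p}\}\setminus\{m+n\tau : m,n<2^{p-1}\}$, counts $|K(p)|\asymp 4^{p-1}$, bounds $|b|\asymp_{\tau}2^{p}$ on each block by a two-case argument, and sums the resulting geometric-type series $\sum_{p}4^{(p-1)(1-t)}$. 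You instead prove the uniform comparison $|m+n\tau|^{2}\asymp_{\tau}m^{2}+n^{2}$ (your constants $\tfrac12$ and $\max\{2,2u^{2}+v^{2}\}$ are correct and use exactly the hypotheses $u\ge 0$, $v\ge 1$) and then apply the integral test in polar coordinates to $\sum_{m,n\ge 1}(m^{2}+n^{2})^{-t}$. Both are elementary and rigorous; your route is arguably shorter and makes the critical exponent $t=1$ transparent as the $\int r^{1-2t}\,dr$ threshold, while the paper's dyadic decomposition keeps all constants explicit and avoids any appeal to integral comparison. If you write yours up, just record the integral-test step carefully (handle the region near the corner $m=n=1$ separately, or restrict the comparison to $[1,\infty)^{2}$ as you indicate), since that is the one place where a sloppy statement could be questioned.
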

\begin{proof}
Let $\tau \in A_{0}$. 
For each non-negative integer $p$, we define $K^{\prime}(p):= \{ b = m + n \tau \in I_{\tau} \ |\ (m, n) \in \mathbb{N}^{2}, m < 2^{p}, n < 2^{p} \}$ and $K(p) := K^{\prime}(p) \setminus K^{\prime}(p-1)$. 
Note that for each non-negative integer $p$, $|K^{\prime}(p)|= (2^{p}-1)^{2}$. 
We deduce that for each $p \in \mathbb{N}$, $|K(p)|=|K^{\prime}(p)|-|K^{\prime}(p-1)|=(2^{p}-1)^{2}-(2^{p-1}-1)^{2}= 3 \cdot 4^{p-1}-2 \cdot 2^{p-1} =2^{p-1}(3\cdot2^{p-1}-2)$ and $ 4^{p-1}\leq |K(p)| \leq 3\cdot 4^{p-1}$. 

Let $ b = m + n \tau = m + n (u + i v) \in K(p)$. We consider the following two cases. 
\begin{itemize}
\item[(i)] If $m \geq 2^{p-1}$ then we have 
\begin{align*}
|b|^{2} 
&= |m + nu + inv|^{2} 
= (m + nu)^{2} + (nv)^{2}\\
&\geq (2^{p-1} + u)^{2} + v^{2} 
\geq (2^{p-1})^{2} + |\tau|^{2} 
= 4^{p-1}\left( 1 + \frac{|\tau|^{2}}{4^{p-1}} \right). 
\end{align*}
\item[(ii)] If $n \geq 2^{p-1}$ then we have
\begin{align*}
|b|^{2} &= |m + nu + inv|^{2} 
= (m + nu)^{2} + (nv)^{2}
\geq n^{2}(u^{2} + v^{2}) \geq 4^{p-1}|\tau|^{2}. 
\end{align*}
\end{itemize}
Then for any $t \geq 0$, we have 

\begin{align*}
\sum_{b \in I_{\tau}} |b|^{-2t} 
= \sum_{p \in \mathbb{N}} \sum_{b \in K(p)} \left\{ |b|^{2} \right\}^{-t} 
&\leq \sum_{p \in \mathbb{N}} |K(p)|  4^{-t(p-1)} \left\{ \min \{ 1 + \frac{|\tau|^{2}}{4^{p-1}}, |\tau|^{2} \} \right\}^{-t} \\
&\leq \sum_{p \in \mathbb{N}}  3 \cdot 4^{ (p-1)(1-t)} \left\{ \min \{ 1 + \frac{|\tau|^{2}}{4^{p-1}}, |\tau|^{2} \} \right\}^{-t}. 
\end{align*}
Hence, we deduce that 
\begin{equation}\label{upperestimate}
\sum_{b \in I_{\tau}} |b|^{-2t} \leq 3 \sum_{p \in \mathbb{N}}  4^{(p-1)(1-t)} \left\{ \min \{ 1 + \frac{|\tau|^{2}}{4^{p-1}}, |\tau|^{2} \} \right\}^{-t}. 
\end{equation}
Moreover, by the inequality $|\tau|^{2} \geq 1$ and the inequality $\displaystyle 1 + \frac{|\tau|^{2}}{4^{p-1}} \geq 1$, we deduce that for all $p \in \mathbb{N}$,  
\begin{equation}\label{dominated}
3 \cdot 4^{(p-1)(1-t)} \left\{ \min \{ 1 + \frac{|\tau|^{2}}{4^{p-1}}, |\tau|^{2} \} \right\}^{-t} \leq 3 \cdot 4^{(p-1)(1-t)}. 
\end{equation}
Also, by the inequality $|b| \leq |m| + |n||\tau| \leq 2^{p}(1 + |\tau|) $, we have
\begin{equation*}
\sum_{b \in I_{\tau}} |b|^{-2t} 
= \sum_{p \in \mathbb{N}} \sum_{b \in K(p)} \left\{ |b|^{-2} \right\}^{t} 
\geq \sum_{p \in \mathbb{N}} |K(p)| 4^{-pt}(1 + |\tau|)^{-2t}. 
\end{equation*}
Thus, we deduce that 
\begin{equation}\label{lowerestimate}
\sum_{b \in I_{\tau}} |b|^{-2t} \geq 4^{-1} \sum_{p \in \mathbb{N}} 4^{p(1-t)}(1 + |\tau|)^{-2t}. 
\end{equation}
Finally, from Lemma \ref{basicestimate}, the inequality (\ref{upperestimate}) and the inequality (\ref{lowerestimate}), it follows that $\psi_{\tau}^{1}(1) = \infty$ and if $t >1$, then $\psi_{\tau}^{1}(t) < \infty$. 
Therefore, we deduce that $\theta_{\tau} = 1$ and by Theorem \ref{hregularequivalent}, we obtain that for all $\tau \in A_{0}$, $S_{\tau}$ is hereditarily regular. Hence, we have proved our lemma.  
\end{proof}
\begin{lemma}\label{betweenoneandtwo}
Let $\tau \in A_{0}$. Then we have $1 < h_{\tau} < 2$. 
\end{lemma}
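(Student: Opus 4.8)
The plan is to prove the two inequalities separately. The lower bound $h_{\tau}>1$ is immediate: by Lemma~\ref{GCCFisahregularCIFS} the CIFS $S_{\tau}$ is hereditarily regular with $\theta_{\tau}=1$, so the last assertion of Theorem~\ref{hregularequivalent} gives $1=\theta_{\tau}<h_{\tau}$.

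For the upper bound $h_{\tau}<2$ I would argue through the pressure function. By Bowen's formula (\cite{MU}), $h_{\tau}=\inf\{t\ge0:P_{\tau}(t)\le0\}$, and $P_{\tau}$ is continuous on $(1,\infty)=\{t\ge0:\psi_{\tau}^{1}(t)<\infty\}$ (recall $\theta_{\tau}=1$ from Lemma~\ref{GCCFisahregularCIFS}); hence it suffices to show $P_{\tau}(2)<0$. Since $P_{\tau}(2)=\inf_{n\in\mathbb{N}}\frac1n\log\psi_{\tau}^{n}(2)$ by Proposition~\ref{logsubadditive}, it is in fact enough to exhibit a single $n_{0}\in\mathbb{N}$ with $\psi_{\tau}^{n_{0}}(2)<1$.

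To produce such an $n_{0}$ I would use an area estimate. Put $U_{n}:=\bigcup_{w\in I_{\tau}^{n}}\phi_{w}(\inter(X))\subseteq\inter(X)$ and let $\mathrm{Leb}$ denote planar Lebesgue measure. Each $\phi_{w}$ is an injective holomorphic map on $V$, so $\mathrm{Leb}(\phi_{w}(\inter(X)))=\int_{\inter(X)}|\phi_{w}'|^{2}\,d\mathrm{Leb}$; combining this with the Bounded Distortion Property (uniform constant $K$) and the pairwise disjointness of the $\phi_{w}(\inter(X))$, $w\in I_{\tau}^{n}$ (iterated OSC), gives
\[
\psi_{\tau}^{n}(2)=\sum_{w\in I_{\tau}^{n}}\Bigl(\sup_{X}|\phi_{w}'|\Bigr)^{2}\le\frac{K^{2}}{\mathrm{Leb}(X)}\sum_{w\in I_{\tau}^{n}}\mathrm{Leb}\bigl(\phi_{w}(\inter(X))\bigr)=\frac{K^{2}}{\mathrm{Leb}(X)}\,\mathrm{Leb}(U_{n}).
\]
It thus remains to prove $\mathrm{Leb}(U_{n})\to0$, and I would do this with a geometric rate: writing $U_{n+1}=\bigcup_{w\in I_{\tau}^{n}}\phi_{w}(U_{1})$ (a disjoint union) and using bounded distortion inside each $\phi_{w}(\inter(X))$ yields $\mathrm{Leb}(\phi_{w}(\inter(X)\setminus U_{1}))\ge K^{-2}\delta_{0}\,\mathrm{Leb}(\phi_{w}(\inter(X)))$ with $\delta_{0}:=\mathrm{Leb}(\inter(X)\setminus U_{1})/\mathrm{Leb}(X)$, whence $\mathrm{Leb}(U_{n+1})\le(1-K^{-2}\delta_{0})\,\mathrm{Leb}(U_{n})$ and $\mathrm{Leb}(U_{n})\to0$ as soon as $\delta_{0}>0$. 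The positivity of $\delta_{0}$ is the only genuinely geometric input: in the notation of the proof of Lemma~\ref{GCCFisCIFS}, $\phi_{b}(\inter(X))=f^{-1}(g_{b}(\inter(X)))$ and $g_{b}(\inter(X))$ is the open disk of radius $1/2$ centred at $\tfrac12+m+n\tau$, so its centre has imaginary part $nv\ge v\ge1$ for every $b=m+n\tau\in I_{\tau}$; hence the nonempty open set $\{\,w\in\mathbb{C}:1<\Re w<2,\ 0<\Im w<1/2\,\}$, which lies in $f(\inter(X))=\{\Re w>1\}$, avoids all the disks $g_{b}(\inter(X))$, and pulling it back by the involution $f$ exhibits a nonempty open subset of $\inter(X)\setminus U_{1}$. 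Hence $\delta_{0}>0$, $\psi_{\tau}^{n_{0}}(2)<1$ for large $n_{0}$, $P_{\tau}(2)<0$, and $h_{\tau}<2$.

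I expect the main obstacle to be the strictness $h_{\tau}<2$: the non-strict bound $h_{\tau}\le2$ is automatic from $J_{\tau}\subseteq X\subseteq\mathbb{C}$, but one should not expect $\psi_{\tau}^{1}(2)\le1$, so passing to high iterates and the geometric decay of $\mathrm{Leb}(U_{n})$—that is, the fact that the disks $g_{b}(\inter(X))$ leave a fixed positive proportion of every generation uncovered, propagated through bounded distortion—is exactly what is needed. Alternatively one may invoke the general principle that a CIFS in $\mathbb{R}^{d}$ with $\overline{\bigcup_{i}\phi_{i}(X)}\neq X$ satisfies $h_{S}<d$, which is proved by this same argument.
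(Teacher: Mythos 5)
Your proposal is correct. The lower bound is handled exactly as in the paper: hereditary regularity and $\theta_{\tau}=1$ from Lemma \ref{GCCFisahregularCIFS} together with the last assertion of Theorem \ref{hregularequivalent} give $1<h_{\tau}$. For the upper bound the geometric core is also the same as the paper's: both arguments hinge on exhibiting a nonempty open subset of $\inter(X)$ disjoint from $\bigcup_{b}\phi_{b}(X)$, obtained by pulling back under $f(z)=1/z$ a region of $f(\inter(X))=\{\Re w>1\}$ that avoids every translated disk $g_{b}(\inter(X))$ (the paper uses a ball in $\{\Re w>1,\ \Im w<0\}$, you use the strip $\{1<\Re w<2,\ 0<\Im w<1/2\}$; both work since those disks have centres with imaginary part $nv\geq 1$ and radius $1/2$). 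Where you genuinely diverge is in converting the resulting positivity of $\lambda_{2}(\inter(X)\setminus\bigcup_{b}\phi_{b}(X))$ into $h_{\tau}<2$: the paper simply invokes Proposition 4.4 of \cite{MU} as a black box, whereas you reprove that implication — bounding $\psi_{\tau}^{n}(2)$ by a multiple of $\mathrm{Leb}(U_{n})$ via bounded distortion and the iterated open set condition, deriving geometric decay of $\mathrm{Leb}(U_{n})$ from the uncovered proportion $\delta_{0}>0$, and concluding $P_{\tau}(2)<0$, hence $h_{\tau}<2$ by Bowen's formula and the continuity of the pressure on $(\theta_{\tau},\infty)$. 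Your route is longer but essentially self-contained (apart from Bowen's formula, which this paper also leaves implicit), and, as you observe yourself, it is precisely the proof of the general principle that the paper cites; the paper's version buys brevity at the cost of an external reference.
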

\begin{proof}
Let $\tau \in A_{0}$. 
By Theorem \ref{hregularequivalent} and Lemma \ref{GCCFisahregularCIFS}, we have $ 1 = \theta_{\tau} < h_{\tau} $. 
We now show that $h_{\tau} < 2$. 
We use the notations in the proof of Proposition \ref{GCCFisCIFS}. 
We have
\[\bigcup_{b \in I_{\tau}}g_{b}(X) \subset \{ z \in \mathbb{C} \ |\ \Re z \geq 1 \ \text{and} \ \Im z \geq 0 \}. \]
Let $U_{0}$ be an open ball such that $U_{0} \subset \{ z \in \mathbb{C} \ |\ \Re z > 1 \ \text{and} \ \Im z < 0 \}$. 
Since $U_{0} \subset Y$, we deduce that $f^{-1}(U_{0}) \subset f^{-1}(Y) = \inter(X)$.
We set $X_{1} := \cup_{b \in I_{\tau}} \phi_{b}(X)$. 
Since $U_{0} \cap \bigcup_{b \in I_{\tau}}g_{b}(X) = \emptyset$, we deduce that $f^{-1}(U_{0}) \cap X_{1} = f^{-1}(U_{0} \cap \bigcup_{b \in I_{\tau}}g_{b}(X))  = \emptyset$. 
It follows $\inter(X) \setminus X_{1} \supset f^{-1}(U_{0})$. 

Therefore, we deduce that $\lambda_{2}(\inter(X) \setminus X_{1}) > 0$, where $\lambda_{2}$ is the 2-dimensional Lebesgue measure. 
By Proposition 4.4 of \cite{MU}, we obtain that $h_{\tau} < 2$. 
Hence, we have proved $1 < h_{\tau} < 2$. 
\end{proof}

\begin{lemma} \label{Xinftythorem}
Let $\tau \in A_{0}$. 
Then, we have that $X_{\tau}(\infty) = \{0\}$. 
\end{lemma}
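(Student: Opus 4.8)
The plan is to prove the two inclusions $X_{\tau}(\infty) \subseteq \{0\}$ and $\{0\} \subseteq X_{\tau}(\infty)$ separately, using essentially only the size estimate of Lemma \ref{basicestimate} together with the observation that $I_{\tau}$ contains no bounded infinite subset.

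First I would record that for every $R > 0$ the set $\{ b \in I_{\tau} \mid |b| \leq R \}$ is finite. Indeed, writing $\tau = u + iv$ with $u \geq 0$ and $v \geq 1$, and $b = m + n\tau$ with $m, n \in \mathbb{N}$, we have $|b| \geq |\Im b| = nv \geq n$ and $|b| \geq \Re b = m + nu \geq m$, so $|b| \leq R$ forces $m \leq R$ and $n \leq R$. Consequently, if $I^{\prime} \subseteq I_{\tau}$ is infinite, then for every $R > 0$ only finitely many $b \in I^{\prime}$ satisfy $|b| \leq R$.

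For the inclusion $X_{\tau}(\infty) \subseteq \{0\}$, let $z \in X_{\tau}(\infty)$, so there exist an infinite $I^{\prime} \subseteq I_{\tau}$ and points $z_{b} \in \phi_{b}(X)$ ($b \in I^{\prime}$) with $\lim_{b \in I^{\prime}} z_{b} = z$ in the sense of Definition \ref{Xinftydefinition}. By Lemma \ref{basicestimate}(1), $\phi_{b}(X) \subseteq \phi_{b}(V) \subseteq B(0, K_{0}|b|^{-1})$, hence $|z_{b}| \leq K_{0}|b|^{-1}$ for every $b \in I^{\prime}$. Given $\epsilon > 0$, choose $R > 0$ with $K_{0}/R < \epsilon/2$; by the previous paragraph $|z_{b}| \leq K_{0}|b|^{-1} < \epsilon/2$ for all but finitely many $b \in I^{\prime}$, while $|z_{b} - z| < \epsilon/2$ for all but finitely many $b \in I^{\prime}$ by the definition of the limit. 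Picking $b \in I^{\prime}$ outside both exceptional finite sets (possible since $I^{\prime}$ is infinite) gives $|z| \leq |z - z_{b}| + |z_{b}| < \epsilon$, and letting $\epsilon \to 0$ yields $z = 0$.

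For the reverse inclusion $0 \in X_{\tau}(\infty)$, I would exhibit an explicit sequence: take $I^{\prime} := \{ m + \tau \mid m \in \mathbb{N} \}$, an infinite subset of $I_{\tau}$, and set $z_{m+\tau} := \phi_{m+\tau}(1/2) \in \phi_{m+\tau}(X)$. Since $|z_{m+\tau}| \leq K_{0}|m+\tau|^{-1} \to 0$ as $m \to \infty$, we get $\lim_{b \in I^{\prime}} z_{b} = 0$, so $0 \in X_{\tau}(\infty)$; combining the two inclusions gives $X_{\tau}(\infty) = \{0\}$. I do not anticipate a genuine obstacle here; the only point needing a little care is to match the precise filter-type definition of $\lim_{i \in I^{\prime}}$ in Definition \ref{Xinftydefinition}, i.e. to phrase "$|b| \to \infty$ along $I^{\prime}$" and "$z_{b} \to z$ along $I^{\prime}$" in terms of cofinite subsets of $I^{\prime}$ and intersect the two exceptional finite sets, after which everything follows directly from Lemma \ref{basicestimate}.
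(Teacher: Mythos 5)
Your proof is correct and follows essentially the same idea as the paper's: the images $\phi_b(X)$ lie in balls $B(0,K_0|b|^{-1})$ and only finitely many $b\in I_\tau$ lie in any bounded region, so the only accumulation point is $0$, which is realized along $\{m+\tau\}_{m\in\mathbb{N}}$. The only (cosmetic) differences are that you argue the inclusion $X_\tau(\infty)\subseteq\{0\}$ directly by citing Lemma \ref{basicestimate}, whereas the paper argues by contradiction and re-derives the lower bound $|z+b|^2\geq m^2+(n-1/2)^2$ from scratch.
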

\begin{proof}
We first show that for all $\tau \in A_{0}$, $0 \in X_{\tau}(\infty)$. 
We set $I_{\tau}^{\prime} := \{ m + \tau \in I_{\tau} \ |\ m \in \mathbb{N} \} \subset I_{\tau}$ and $b_{m} := m + \tau \in I_{\tau}^{\prime}$. 
Then, we have that $|I_{\tau}^{\prime}| = \infty$ and since $0 \in X$, $\phi_{b_{m}}(0) \in \phi_{b_{m}}(X)$. 
Let $\epsilon > 0$. 
Then, there exists $M \in \mathbb{N}$ such that $M > 1/\epsilon$. 
Let $F_{\tau} := \{ m + \tau \in I_{\tau} \ |\ m \in \mathbb{N}, m \leq M \} \subset I_{\tau}^{\prime}$. 
We obtain that $|F_{\tau}| < \infty$ and if $b_{m} \in I_{\tau}^{\prime} \setminus F_{\tau}$, then $\phi_{b_{m}}(0) \in \phi_{b_{m}}(X)$ and
\[|\phi_{b_{m}}(0)| = \left| \frac{1}{m + \tau} \right| < \frac{1}{m} < \frac{1}{M} < \epsilon. \]

We next show that for each $\tau \in A_{0}$, $a \in X_{\tau}(\infty)$ implies $a = 0$. 
Suppose that there exists $a \in X_{\tau}(\infty)$ such that $a \neq 0$. 
Then, there exist $I^{\prime}_{\tau} \subset I_{\tau}$ and $\{z_{b}^{\prime}\}_{b \in I^{\prime}_{\tau}}$ such that $|I^{\prime}_{\tau}| = \infty$, $z_{b}^{\prime} \in \phi_{b}(X) \ \ (b \in I^{\prime}_{\tau})$ and $\displaystyle \lim_{b \in I^{\prime}_{\tau}}z_{b}^{\prime} = a$. 
Let $\delta := |a|/2 > 0$. 
Then, there exists $F_{\tau}^{\prime} \subset I_{\tau}^{\prime}$ such that $|F_{\tau}^{\prime}| < \infty$ and for all $b \in I^{\prime}_{\tau} \setminus F_{\tau}^{\prime}$, $|z_{b}^{\prime} -a| < \delta$. 
In particular, for all $b \in I^{\prime}_{\tau} \setminus F_{\tau}^{\prime}$, 
\begin{equation} \label{xbabove}
|z_{b}^{\prime}| \geq |a| - |z_{b}^{\prime} - a| > \delta. 
\end{equation}

Moreover, for each $z \in X$, $\tau \in A_{0}$ and $b \in I_{\tau}$, we write $z := x +yi$, $\tau := u+iv$ and $b := m+n\tau$. 
Note that

\begin{align*}
|z+b|^{2}
&= |x+m+nu+i(y+nv)|^{2} 
= (x+m+nu)^{2}+(y+nv)^{2}\\
&\geq (0+m+nu)^{2} + (-1/2+nv)^{2} \geq m^{2} + (n -1/2)^{2}. 
\end{align*}
Let $M := 1/\delta$. 
By using the above inequality, there exists $N_{\delta} \in \mathbb{N}$ such that for all $m \in \mathbb{N}$, $n \in \mathbb{N}$ and $x \in X$, if $m \geq N_{\delta}$ or $n \geq N_{\delta}$, then $|z+b| > M = 1/\delta$. 
In particular, $b \in I_{\tau} \setminus F_{\tau}(N_{\delta})$ implies that for all $z \in X$, $|\phi_{b}(z)|< \delta$. 
Here, $F_{\tau}(N_{\delta}) := \{ b:= m+n\tau \in I_{\tau} \ |\ n \leq N_{\delta}, m \leq N_{\delta} \}$. 

By the inequality (\ref{xbabove}) and $|F_{\tau}(N_{\delta})| < \infty$, this contradicts that there exist $b \in I^{\prime}_{\tau} \setminus (F_{\tau}^{\prime}\cup F_{\tau}(N_{\delta}))$ and $z_{b}^{\prime} \in \phi_{b}(X)$ such that $|z_{b}^{\prime}| > \delta$. 
Therefore, we have proved that for all $\tau \in A_{0}$, $X_{\tau}(\infty) = \{0\}$. 
\end{proof}
\section{Proof of the main result}
In this section, we prove the main result Theorem \ref{main}. 
In order to prove Theorem \ref{main}, we first show a basic estimate for the conformal measure. 

Note that for each $\tau \in A_{0}$, there exists the unique $h_{\tau}$-conformal measure $m_{S_{\tau}}$ of $S_{\tau}$ by Proposition \ref{existenceofconformalmeasure} since for each $\tau \in A_{0}$, $S_{\tau}$ is hereditarily regular. 
We set $m_{\tau} := m_{S_{\tau}}$. 
\begin{lemma} \label{basicconformalestimate}
Let $\tau \in A_{0}$ and $m_{\tau}$ be the $h_{\tau}$-conformal measure of $S_{\tau}$. 
Then, there exists $K_{0} \geq 1$ such that for each $b \in I_{\tau}$, we have $\phi_{b}(X) \subset B(0, K_{0} |b|^{-1})$ and 
\begin{equation*}
m_{\tau}(\phi_{b}(X)) \geq K_{0}^{-h_{\tau}}|b|^{-2h_{\tau}}. 
\end{equation*}
\end{lemma}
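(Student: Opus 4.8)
The plan is to read off the statement from the two earlier results Lemma \ref{basicestimate} and Proposition \ref{existenceofconformalmeasure}, with essentially no new ideas required beyond what is sketched around inequality (\ref{conformalBDPestimate}) in the introduction. First I would invoke Lemma \ref{basicestimate} to fix a constant $K_{0} \geq 1$ such that for every $b \in I_{\tau}$ one has $\phi_{b}(V) \subset B(0, K_{0}|b|^{-1})$ and $K_{0}^{-1}|b|^{-2} \leq |\phi_{b}^{\prime}(z)| \leq K_{0}|b|^{-2}$ for all $z \in V$. Since $V = B(1/2, r_{1})$ with $r_{1} > 1/2$, we have $X \subset V$, so the first assertion $\phi_{b}(X) \subset B(0, K_{0}|b|^{-1})$ is immediate, and the lower derivative bound $|\phi_{b}^{\prime}(z)| \geq K_{0}^{-1}|b|^{-2}$ holds in particular for all $z \in X$.

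For the measure estimate I would apply property (2) of Proposition \ref{existenceofconformalmeasure} with $A = X$ and $i = b$ to get $m_{\tau}(\phi_{b}(X)) = \int_{X} |\phi_{b}^{\prime}|^{h_{\tau}} \, \mathrm{d}m_{\tau}$. Because $h_{\tau} > 0$ (Lemma \ref{betweenoneandtwo}), exponentiating the pointwise lower bound on $|\phi_{b}^{\prime}|$ by $h_{\tau}$ is legitimate and yields $|\phi_{b}^{\prime}(z)|^{h_{\tau}} \geq K_{0}^{-h_{\tau}}|b|^{-2h_{\tau}}$ for all $z \in X$. Integrating this constant lower bound against $m_{\tau}$ and using that $m_{\tau}$ is a Borel probability measure with $m_{\tau}(J_{S_{\tau}}) = 1$ and $J_{S_{\tau}} \subset X$, so $m_{\tau}(X) = 1$, I obtain $m_{\tau}(\phi_{b}(X)) \geq K_{0}^{-h_{\tau}}|b|^{-2h_{\tau}} \, m_{\tau}(X) = K_{0}^{-h_{\tau}}|b|^{-2h_{\tau}}$, which is exactly the claimed bound.

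I do not expect any genuine obstacle here: the lemma is a bookkeeping corollary of the preceding results. The only points that deserve a line of care are that a \emph{single} constant $K_{0}$ must simultaneously govern the inclusion $\phi_{b}(X) \subset B(0, K_{0}|b|^{-1})$ and the two-sided derivative estimate, uniformly over all $b \in I_{\tau}$ (this is precisely what Lemma \ref{basicestimate} provides, taking the same $K_{0}$, and one may always enlarge it if convenient), and that the exponent $h_{\tau}$ is strictly positive so that raising the derivative inequality to the power $h_{\tau}$ preserves the direction of the inequality and produces a nontrivial bound. With these observations the proof is complete.
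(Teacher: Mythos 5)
Your proposal is correct and follows essentially the same route as the paper: apply Lemma \ref{basicestimate} with $K = K_{0}$ to get the inclusion and the uniform derivative bounds on $V \supset X$, then use property (2) of Proposition \ref{existenceofconformalmeasure} with $A = X$ together with $m_{\tau}(X) = 1$ to integrate the constant lower bound $(K_{0}^{-1}|b|^{-2})^{h_{\tau}}$. The extra care you take in noting $h_{\tau} > 0$ and $m_{\tau}(X)=1$ is exactly what the paper uses implicitly.
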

\begin{proof}
By Lemma \ref{basicestimate} with $K = K_{0}$, we deduce that for all $b \in I_{\tau}$ and $z \in V$, $ \phi_{b}(V) \subset B(0, K_{0} |b|^{-1}) \quad \text{and} \quad K_{0}^{-1} |b|^{-2} \leq |\phi_{b}^{\prime}(z)| \leq K_{0} |b|^{-2}$. 
Therefore, we have $\phi_{b}(X) \subset B(0, K_{0} |b|^{-1})$ and 
\begin{equation*}
m_{\tau}(\phi_{b}(X)) = \int_{X} |\phi_{b}^{\prime}|^{h_{\tau}} \mathrm{d} m_{\tau} \geq (K_{0}^{-1}|b|^{-2})^{h_{\tau}}m_{\tau}(X) = K_{0}^{-h_{\tau}}|b|^{-2h_{\tau}}. 
\end{equation*}
Thus, we have proved our lemma. 
\end{proof}
We explain the idea of the proof of Theorem \ref{main}. 
Recall that $X_{\tau}(\infty) = \{0\}$. 
By Lemma \ref{basicconformalestimate}, for sufficiently small $r >0$, $b \in I_{\tau}$ and $N>0$ with $r/N < K_{0}|b|^{-1} < r$, we have $\phi_{b}(X) \subset B(0, K_{0} |b|^{-1}) \subset B(0, r)$ and
\begin{equation} \label{basicconformalestimatewithr}
\frac{m_{\tau}(B(0, r))}{r^{h_{\tau}}} \geq \frac{m_{\tau}(\phi_{b}(X))}{r^{h_{\tau}}} \geq \frac{K_{0}^{-h_{\tau}}|b|^{-2h_{\tau}}}{r^{h_{\tau}}} \geq \frac{K_{0}^{-h_{\tau}}}{r^{h_{\tau}}} \left( \frac{r}{NK_{0}} \right)^{2h_{\tau}} \simeq r^{h_{\tau}}. 
\end{equation}
This inequality (\ref{basicconformalestimatewithr}) does not satisfy the assumption of Theorem \ref{Hausdorffmeasureestimate} unfortunately. 
However, since for all $b, b^{\prime} \in I_{\tau}$ with $b \neq b^{\prime}$, $m_{\tau}(\phi_{b}(X) \cap \phi_{b^{\prime}}(X) ) = 0$, we have a sharper estimate on the value of $m_{\tau}(B(0, r))$. 
To obtain this estimate, we set
\[I_{\tau}(r) := \{ b \in I_{\tau} \ | \ r/N_{\tau} \leq K_{0} |b|^{-1} < r \}, \]
where $N_{\tau}$ is the number we introduce later. 
Then, in the proof of Theorem \ref{main}, we will show that  
\begin{equation}\label{intuitiveconformalmeasureestimate}
\frac{m_{\tau}(B(0, r))}{r^{h_{\tau}}} \geq \sum_{b \in I_{\tau}(r)}\frac{m_{\tau}(\phi_{b}(X))}{r^{h_{\tau}}} \geq |I_{\tau}(r)| K_{0}^{-3h_{\tau}}N_{\tau}^{-2h_{\tau}}r^{h_{\tau}}. 
\end{equation}
Note that since $I_{\tau}(r) = \{ b \in I_{\tau} \ | \ K_{0}r^{-1} < |b| \leq N_{\tau} K_{0}r^{-1} \}$, we have
\begin{equation}\label{intuitivelatticepointnumber}
|I_{\tau}(r)| \gtrsim r^{-2}
\end{equation}
intuitively since we have a intuition that the number of the points $b \in I_{\tau}(r)$ in the slant lattice $I_{\tau}$ is almost the same as the area of $I_{\tau}(r)$. 
This estimate will be a key estimate in the proof of Theorem \ref{main}. 
After proving Lemma \ref{insidetwocircle}, Proposition \ref{latticepointlemma} and Lemma \ref{latticepointestimate}, we will rigorously show estimate (\ref{intuitivelatticepointnumber}), whose precise statement is given by (\ref{taulatticepointestimate}) later.  

To prove this intuitive estimate (\ref{intuitivelatticepointnumber}) rigorously, we introduce the following notations and prove Lemma \ref{insidetwocircle}, Proposition \ref{latticepointlemma} and Lemma \ref{latticepointestimate}. 
We identify $\mathbb{C}$ with $\mathbb{R}^{2}$, $I_{\tau}$ with $\{ {}^{t}(a, b) \in \mathbb{R}^{2} \ | \ a+ib \in I_{\tau} \}$ and $\mathbb{N}^{2}$ with $\{ {}^{t} (m, n) \in \mathbb{R}^{2} \ | \ m, n \in \mathbb{N} \}$, where for any matrix $A$, we denote by ${}^{t} A$ the transpose of $A$.  
For each $\tau = u + iv \in A_{0}$, we set \[ E_{\tau} := \left( \begin{array}{cc} 1 & u \\ 0 & v \end{array}\right) \quad \text{and} \quad F_{\tau} := {}^{t}E_{\tau} E_{\tau} = \left( \begin{array}{cc} 1 & u \\ u & |\tau|^{2} \end{array}\right). \]
Note that $E_{\tau}\mathbb{N}^{2} = I_{\tau}$, since $\det(E_{\tau}) = v \neq 0$, $E_{\tau}$ is invertible and by direct calculations, there exist the eigenvalues $\lambda_{1} > 0$ and $\lambda_{2} > 0$ of $F_{\tau}$ with $\lambda_{1} < \lambda_{2}$. 
Let $v_{1} \in \mathbb{R}^{2}$ be a eigenvector with respect to $\lambda_{1}$ and $v_{2} \in \mathbb{R}^{2}$ be a eigenvector with respect to $\lambda_{2}$. 
Note that since $F_{\tau}$ is a symmetric matrix, there exist eigenvectors $v_{1} \in \mathbb{R}^{2}$ and $v_{2} \in \mathbb{R}^{2}$ such that $V_{\tau} := (v_{1}, v_{2})$ is an orthogonal matrix. 

For each $R_{1} > 0$ and $R_{2} > 0$  with $R_{1}/\sqrt{\lambda_{1}} < R_{2}/\sqrt{\lambda_{2}}$, we set 
\begin{align*}
& D_{1}^{\prime}(\tau, R_{1}, R_{2}) := \{ ^{t}(x, y) \in \mathbb{R}^{2} \ | \ R_{1}^{2}/\lambda_{1} < x^{2} + y^{2} \leq R_{2}^{2}/\lambda_{2} \} \quad \text{and} \\
& D_{2}^{\prime}(R_{1}, R_{2}) := \{ ^{t}(x, y) \in \mathbb{R}^{2} \ |\ R_{1}^{2} < x^{2} + y^{2} \leq R_{2}^{2} \}. 
\end{align*}
We show the following statement on the annuli $D_{1}^{\prime}(\tau, R_{1}, R_{2})$ and $D_{2}^{\prime}(R_{1}, R_{2})$. 
\begin{lemma} \label{insidetwocircle}
Let $\tau \in A_{0}$ and let $R_{1} > 0$ and $R_{2} > 0$ with $R_{1}/\sqrt{\lambda_{1}} < R_{2}/\sqrt{\lambda_{2}}$. 
Then, we have that $E_{\tau}(D_{1}^{\prime}(\tau, R_{1}, R_{2})) \subset D_{2}^{\prime}(R_{1}, R_{2})$. 
In particular, we have that 
\[ E_{\tau} (\mathbb{N}^{2} \cap D_{1}^{\prime}(\tau, R_{1}, R_{2})) \subset I_{\tau} \cap D_{2}^{\prime}(R_{1}, R_{2}) \quad \text{
and} \quad |\mathbb{N}^{2} \cap D_{1}^{\prime}(\tau, R_{1}, R_{2})| \leq |I_{\tau} \cap D_{2}^{\prime}(R_{1}, R_{2})|. \]
\end{lemma}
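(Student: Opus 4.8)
The plan is to use the diagonalization of the symmetric matrix $F_\tau = {}^t E_\tau E_\tau$ via the orthogonal matrix $V_\tau = (v_1, v_2)$. First I would record the key identity: for any $w \in \mathbb{R}^2$, $|E_\tau w|^2 = {}^t w\, F_\tau\, w$. Writing $w$ in the orthonormal eigenbasis as $w = \alpha v_1 + \beta v_2$, so that $|w|^2 = \alpha^2 + \beta^2$, we get $|E_\tau w|^2 = \lambda_1 \alpha^2 + \lambda_2 \beta^2$. Since $0 < \lambda_1 < \lambda_2$, this sandwiches $|E_\tau w|^2$ between $\lambda_1 |w|^2$ and $\lambda_2 |w|^2$, i.e.
\[
\lambda_1 |w|^2 \le |E_\tau w|^2 \le \lambda_2 |w|^2 .
\]
Now take $w = {}^t(x,y) \in D_1^{\prime}(\tau, R_1, R_2)$, so $R_1^2/\lambda_1 < |w|^2 \le R_2^2/\lambda_2$. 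Multiplying the left half of the display by using $|w|^2 > R_1^2/\lambda_1$ gives $|E_\tau w|^2 \ge \lambda_1 |w|^2 > \lambda_1 \cdot R_1^2/\lambda_1 = R_1^2$, and multiplying the right half using $|w|^2 \le R_2^2/\lambda_2$ gives $|E_\tau w|^2 \le \lambda_2 |w|^2 \le \lambda_2 \cdot R_2^2/\lambda_2 = R_2^2$. Hence $R_1^2 < |E_\tau w|^2 \le R_2^2$, which is exactly the statement $E_\tau w \in D_2^{\prime}(R_1, R_2)$. This proves $E_\tau(D_1^{\prime}(\tau, R_1, R_2)) \subset D_2^{\prime}(R_1, R_2)$.

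For the "in particular" part, I would intersect with $\mathbb{N}^2$: if $w \in \mathbb{N}^2 \cap D_1^{\prime}(\tau, R_1, R_2)$, then $E_\tau w \in E_\tau \mathbb{N}^2 = I_\tau$ (recalling $E_\tau \mathbb{N}^2 = I_\tau$ from the discussion preceding the lemma) and $E_\tau w \in D_2^{\prime}(R_1, R_2)$ by the first part, so $E_\tau w \in I_\tau \cap D_2^{\prime}(R_1, R_2)$. This gives the claimed inclusion $E_\tau(\mathbb{N}^2 \cap D_1^{\prime}(\tau, R_1, R_2)) \subset I_\tau \cap D_2^{\prime}(R_1, R_2)$. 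Finally, since $E_\tau$ is invertible (as $\det E_\tau = v \ne 0$), it is injective, so the restriction of $E_\tau$ to $\mathbb{N}^2 \cap D_1^{\prime}(\tau, R_1, R_2)$ is an injection into $I_\tau \cap D_2^{\prime}(R_1, R_2)$, whence the cardinality inequality $|\mathbb{N}^2 \cap D_1^{\prime}(\tau, R_1, R_2)| \le |I_\tau \cap D_2^{\prime}(R_1, R_2)|$ follows immediately.

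I do not expect a genuine obstacle here; the only point requiring a little care is the direction of the inequalities when passing from the eigenvalue bounds to the radii — one must match $\lambda_1$ with $R_1$ and $\lambda_2$ with $R_2$ correctly, which is why the hypothesis is stated as $R_1/\sqrt{\lambda_1} < R_2/\sqrt{\lambda_2}$ (this is precisely what makes $D_1^{\prime}(\tau, R_1, R_2)$ a nonempty annulus and makes the two estimates compatible). Everything else is the standard fact that a positive definite quadratic form is pinched between its smallest and largest eigenvalues times the squared norm.
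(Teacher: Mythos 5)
Your proof is correct and follows essentially the same route as the paper: both arguments rest on the identity $|E_\tau w|^2 = {}^t w\,F_\tau\,w$ and the eigenvalue sandwich $\lambda_1 |w|^2 \le {}^t w\,F_\tau\,w \le \lambda_2 |w|^2$ obtained from the orthogonal diagonalization of $F_\tau$, followed by the observation that $E_\tau$ is injective so the inclusion yields the cardinality bound. The paper merely writes the same computation in explicit coordinates $(x',y') = (x,y)V_\tau$ rather than in the abstract eigenbasis.
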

\begin{proof}
By the above observation of $F_{\tau}$, we deduce that 
\begin{equation*} \label{diagofE}
F_{\tau} = \ V_{\tau} \left( \begin{array}{cc} \lambda_{1} & 0 \\ 0 & \lambda_{2} \end{array}\right) {}^{t}V_{\tau}. 
\end{equation*}

Let $^{t}(x, y) \in D_{1}^{\prime}(\tau, R_{1}, R_{2})$. 
We set $(x^{\prime}, y^{\prime}) := (x, y) \ V_{\tau}$ and $ (v, w) := (x, y) \ ^{t}E_{\tau}$. 
Note that since $V_{\tau}$ is an orthogonal matrix, we deduce that $(x^{\prime})^{2}+(y^{\prime})^{2} = x^{2} + y^{2}$. 
Since $\lambda_{1} < \lambda_{2}$, we have
\begin{align*}
R_{1}^{2} 
&< \lambda_{1}(x^{2} + y^{2}) 
= \lambda_{1}((x^{\prime})^{2}+(y^{\prime})^{2}) 
< \lambda_{1}(x^{\prime})^{2} + \lambda_{2}(y^{\prime})^{2} \\
&= (x^{\prime}, y^{\prime}) \left( \begin{array}{cc} \lambda_{1} & 0 \\ 0 & \lambda_{2} \end{array}\right) \left( \begin{array}{c} x^{\prime} \\ y^{\prime}  \end{array}\right) 
= (x, y) \ V_{\tau} \left( \begin{array}{cc} \lambda_{1} & 0 \\ 0 & \lambda_{2} \end{array}\right) {}^{t}V_{\tau} \left( \begin{array}{c} x \\ y  \end{array}\right) \\
&= (x, y) \ F_{\tau} \left( \begin{array}{c} x \\ y  \end{array}\right) 
= (x, y) \ ^{t}E_{\tau} E_{\tau} \left( \begin{array}{c} x \\ y  \end{array}\right). 
\end{align*}
By the above inequality, we deduce that $R_{1}^{2} < v^{2} + w^{2}$. 
Also, 
\begin{align*}
R_{2}^{2} 
&\geq \lambda_{2}(x^{2} + y^{2}) 
= \lambda_{2}((x^{\prime})^{2}+(y^{\prime})^{2}) 
\geq \lambda_{1}(x^{\prime})^{2} + \lambda_{2}(y^{\prime})^{2} \\
&= (x^{\prime}, y^{\prime}) \left( \begin{array}{cc} \lambda_{1} & 0 \\ 0 & \lambda_{2} \end{array}\right) \left( \begin{array}{c} x^{\prime} \\ y^{\prime}  \end{array}\right) 
= (x, y) \ V_{\tau} \left( \begin{array}{cc} \lambda_{1} & 0 \\ 0 & \lambda_{2} \end{array}\right) {}^{t}V_{\tau} \left( \begin{array}{c} x \\ y  \end{array}\right) \\
&= (x, y) \ F_{\tau} \left( \begin{array}{c} x \\ y  \end{array}\right) 
= (x, y) \ ^{t}E_{\tau} E_{\tau} \left( \begin{array}{c} x \\ y  \end{array}\right). 
\end{align*}
By the above inequality, we deduce that $v^{2} + w^{2} \leq R_{2}^{2}$. 
Therefore, we have proved our lemma. 
\end{proof}
For each $R > 0$, we set $I(R) := \{ ^{t}(m, n) \in \mathbb{N}^{2} \ | \ m^{2} + n^{2} \leq R^{2} \}$. 

We give the following estimate on $|I(R)|$. 
\begin{proposition} \label{latticepointlemma}
Let $R > 0$. Then, for each $R \geq 6$, 
\[ 0 < \frac{R^{2}-7R+7}{2} \leq |I(R)| \leq R^{2}. \]
\end{proposition}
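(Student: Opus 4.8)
The plan is to interpret $|I(R)|$ geometrically: $I(R)$ is the set of lattice points $^{t}(m,n)$ with $m,n\ge 1$ lying inside the closed disk of radius $R$ centered at the origin, i.e. the lattice points in the intersection of that disk with the open first quadrant $\{m\ge 1,\ n\ge 1\}$. Each such lattice point $^{t}(m,n)$ is the lower-left corner of a unit square $Q_{m,n}:=[m,m+1]\times[n,n+1]$, and the interiors of these squares are pairwise disjoint, so $|I(R)|$ equals the area of the union $\bigcup_{(m,n)\in I(R)} Q_{m,n}$. I would bound this area above and below by comparing with quarter-disks.

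For the upper bound: every square $Q_{m,n}$ with $(m,n)\in I(R)$ is contained in the quarter-disk $\{x^{2}+y^{2}\le R^{2},\ x\ge 1,\ y\ge 1\}\subset\{x^{2}+y^{2}\le R^{2},\ x\ge 0,\ y\ge 0\}$... wait, that is not quite right since the far corner of $Q_{m,n}$ can stick out. Instead I would argue: if $(m,n)\in I(R)$ then $m,n\le R$, so $Q_{m,n}\subset[1,R+1]^{2}$, but more efficiently, $Q_{m,n}$ is contained in the quarter-disk of radius $R+\sqrt2$ in the first quadrant, giving $|I(R)|\le \frac{\pi}{4}(R+\sqrt2)^{2}$; for $R\ge 6$ one checks this is $\le R^{2}$ by a direct elementary estimate (since $\frac{\pi}{4}(1+\sqrt2/R)^{2}\le 1$ once $R$ is moderately large, and $R\ge 6$ suffices). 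For the lower bound: the union $\bigcup Q_{m,n}$ covers the region $\{x^{2}+y^{2}\le (R-\sqrt2)^{2},\ x\ge 1,\ y\ge 1\}$, because any point $^{t}(x,y)$ with $x,y\ge 1$ and $x^{2}+y^{2}\le(R-\sqrt2)^{2}$ lies in some $Q_{m,n}$ with $m=\lfloor x\rfloor,n=\lfloor y\rfloor$, and then $(m,n)$ satisfies $m,n\ge 1$ and $m^{2}+n^{2}\le x^{2}+y^{2}\le (R-\sqrt2)^{2}\le R^{2}$. Hence $|I(R)|\ge \mathrm{area}\big(\{x^{2}+y^{2}\le(R-\sqrt2)^{2}\}\cap\{x\ge1,y\ge1\}\big)$. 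This quarter-disk-minus-two-strips area is $\frac{\pi}{4}(R-\sqrt2)^{2}$ minus the area of the two unit-width strips near the axes, which is at most $2(R-\sqrt2)$; so $|I(R)|\ge \frac{\pi}{4}(R-\sqrt2)^{2}-2(R-\sqrt2)$, and a routine computation shows this is $\ge \frac{R^{2}-7R+7}{2}$ for $R\ge 6$, while $\frac{R^{2}-7R+7}{2}>0$ for $R\ge 6$ since $36-42+7=1>0$ and the quadratic is increasing there.

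Concretely I would carry out the steps in this order: (1) set up the unit-square bijection and note $|I(R)|=\lambda_{2}\big(\bigcup_{(m,n)\in I(R)}Q_{m,n}\big)$; (2) prove the inclusion $\bigcup Q_{m,n}\subset \overline{B(0,R+\sqrt2)}\cap(\text{first quadrant})$ and deduce $|I(R)|\le\frac{\pi}{4}(R+\sqrt2)^{2}$, then verify $\frac{\pi}{4}(R+\sqrt2)^{2}\le R^{2}$ for $R\ge 6$; (3) prove the reverse inclusion $\{x,y\ge1\}\cap\overline{B(0,R-\sqrt2)}\subset\bigcup Q_{m,n}$ and deduce $|I(R)|\ge\frac{\pi}{4}(R-\sqrt2)^{2}-2(R-\sqrt2)$; (4) verify the elementary polynomial inequality $\frac{\pi}{4}(R-\sqrt2)^{2}-2(R-\sqrt2)\ge\frac{R^{2}-7R+7}{2}$ for $R\ge 6$, and that the right-hand side is positive there.

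The only real obstacle is bookkeeping: making the constants in the strip-area overcounts honest so that the clean bounds $R^{2}$ and $\frac{R^{2}-7R+7}{2}$ actually come out, and checking the two one-variable inequalities hold from $R=6$ onward (both reduce to checking a quadratic has the right sign, which is immediate). There is no conceptual difficulty; the comparison of a lattice-point count with the area it tiles is the standard Gauss-circle-type argument, here only needing crude $O(R)$ error control rather than anything delicate.
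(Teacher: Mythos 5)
Your overall strategy (a Gauss-circle-type comparison of the lattice count with quarter-disk areas) is different from the paper's, which never mentions areas or $\pi$ at all: the paper writes $|I(R)|=\sum_{l=1}^{M}N(l)$ with $M=\lfloor\sqrt{R^{2}-1}\rfloor$ and $N(l)=\lfloor\sqrt{R^{2}-l^{2}}\rfloor$, then gets the upper bound from $\sum_{l}\sqrt{R^{2}-l^{2}}\leq RM\leq R\sqrt{R^{2}-1}\leq R^{2}$ and the lower bound from $\sqrt{R^{2}-l^{2}}-1\geq R-l-1$ summed in closed form. Your lower bound is sound: the covering claim is correct (indeed you do not even need the $-\sqrt{2}$, since $\lfloor x\rfloor\leq x$ and $\lfloor y\rfloor\leq y$ already give $m^{2}+n^{2}\leq x^{2}+y^{2}\leq R^{2}$), and the quadratic $\tfrac{\pi}{4}(R-\sqrt{2})^{2}-2(R-\sqrt{2})-\tfrac{1}{2}(R^{2}-7R+7)$ has positive leading coefficient $\tfrac{\pi}{4}-\tfrac12$ and negative discriminant, so that inequality holds for all $R$.

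The genuine gap is in your step (2). The inequality $\tfrac{\pi}{4}(R+\sqrt{2})^{2}\leq R^{2}$, which you assert holds for $R\geq 6$, is false on roughly $6\leq R<11.02$: it is equivalent to $1+\sqrt{2}/R\leq 2/\sqrt{\pi}\approx 1.128$, i.e. $R\geq\sqrt{2}/0.128\approx 11$, and at $R=6$ it reads $43.2\leq 36$. So as written your upper bound does not close. It is repairable in at least two ways: either attach to each $(m,n)\in I(R)$ the square $[m-1,m]\times[n-1,n]$ (whose farthest corner from the origin is $(m,n)$ itself), so the union sits inside the quarter-disk of radius $R$ and $|I(R)|\leq\tfrac{\pi}{4}R^{2}<R^{2}$ immediately; or keep your lower-left-corner squares but also subtract the two unit strips along the axes that they avoid (since $Q_{m,n}\subset\{x\geq 1,\ y\geq 1\}$), which removes about $2R$ from the area and brings the bound back under $R^{2}$ for $R\geq 6$. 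Either fix, or simply the paper's column-sum computation, completes the argument; the positivity check $\tfrac{1}{2}(36-42+7)=\tfrac12>0$ plus monotonicity is fine as you have it.
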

\begin{proof}
For each $a \in \mathbb{R}$, we denote by $\lfloor a \rfloor$ the maximum integer of the set $\{ n \in \mathbb{Z} \ | \ n \leq a \}$. 
Let $R \geq 6$. 
We set $M:= \lfloor \sqrt{R^{2} -1} \rfloor (\geq 1)$. 
For each $l = 1, \ldots , M$, we set $N(l):= \lfloor \sqrt{R^{2} - l^{2}} \rfloor (\geq 1)$. 
Note that since $M \leq \sqrt{R^{2} -1} < M+1 $, we deduce that 
\begin{equation} \label{Mestimate}
\sqrt{R^{2} -1} - 1 < M \leq \sqrt{R^{2} -1}. 
\end{equation}
Also, since $N(l) \leq \sqrt{R^{2} - l^{2}} < N(l)+1 $, we deduce that 
\begin{equation} \label{Nestimate}
\sqrt{R^{2} -l^{2}} - 1 < N(l) \leq \sqrt{R^{2} - l^{2}}. 
\end{equation}
Therefore, we deduce that $|I(R)| = \sum_{l = 1}^{M} N(l)$. 

By the inequalities (\ref{Mestimate}) and (\ref{Nestimate}), we deduce that 
\begin{equation*}
|I(R)| \leq \sum_{l = 1}^{M} \sqrt{R^{2} - l^{2}} \leq R M \leq R \sqrt{R^{2} -1} \leq R^{2}. 
\end{equation*}

We now show that $|I(R)| \geq (R^{2} -7R +7)/2$. 
Since $\sqrt{R^{2} - l^{2}} \geq R - l$ for each $l = 1, \ldots , M$, by the inequalities (\ref{Mestimate}) and (\ref{Nestimate}) again, we deduce that 
\begin{align*}
|I(R)| 
&\geq \sum_{l = 1}^{M} \left(\sqrt{R^{2} - l^{2}}-1\right) 
\geq \sum_{l = 1}^{M} (R - l - 1) 
= M(R-1) - \frac{M(M+1)}{2} \\
&= \frac{M(2R-3) - M^{2}}{2} 
\geq \frac{\left(\sqrt{R^{2}-1}-1\right)(2R-3) - (R^{2}-1)}{2} \\
&\geq \frac{(R-2)(2R-3) -R^{2} + 1}{2} 
= \frac{R^{2}-7R+7}{2}. 
\end{align*}
Thus, we have proved our lemma.  
\end{proof}
For each $\tau \in A_{0}$, we set $N_{\tau} := \sqrt{2\lambda_{2}}/\sqrt{\lambda_{1}} + 1 \ (> 2)$. 
For each $R > 0$, we set $D_{1}(\tau, R) := D_{1}^{\prime}(\tau, R, N_{\tau}R)$ and $D_{2}(\tau, R) := D_{2}^{\prime}(R, N_{\tau}R)$. 
Note that since $\sqrt{\lambda_{2}}/\sqrt{\lambda_{1}} < N_{\tau}$, we have that $R/\sqrt{\lambda_{1}} < (N_{\tau}R)/\sqrt{\lambda_{2}}$. 

We estimate $|\mathbb{N}^{2} \cap D_{1}(\tau, R)|$ from below as follows. 
\begin{lemma} \label{latticepointestimate}
Let $\tau \in A_{0}$. Then, there exist $R_{\tau} > 0$ and $L_{\tau}>0$ such that for all $R > R_{\tau}$, 
\[|\mathbb{N}^{2} \cap D_{1}(\tau, R)| \geq L_{\tau} R^{2} -  \frac{7N_{\tau}}{2\sqrt{\lambda_{2}}} R. \]
\end{lemma}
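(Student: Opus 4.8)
The statement counts lattice points $^{t}(m,n)\in\mathbb{N}^2$ lying in the annulus $D_1(\tau,R) = D_1'(\tau,R,N_\tau R)$, i.e. those with $R^2/\lambda_1 < m^2+n^2 \le (N_\tau R)^2/\lambda_2$. The natural approach is to reduce this to a difference of two disk-counts of the form $|I(\rho)|$ (recall $I(\rho)=\{^{t}(m,n)\in\mathbb{N}^2 \mid m^2+n^2\le\rho^2\}$) and then feed each term into Proposition \ref{latticepointlemma}. Concretely, write $\rho_2 := N_\tau R/\sqrt{\lambda_2}$ for the outer radius and $\rho_1 := R/\sqrt{\lambda_1}$ for the inner radius; since the inner boundary is a strict inequality, every point counted in $I(\rho_1)$ is excluded from the annulus, so
\[
|\mathbb{N}^2 \cap D_1(\tau,R)| = |I(\rho_2)| - |I(\rho_1)|
\]
(one should be a touch careful: $I(\rho_1)$ may contain points on the circle $m^2+n^2=\rho_1^2$ which are genuinely excluded from $D_1$ since that boundary is open, so this identity is exactly right, not merely an inequality). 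The choice $N_\tau = \sqrt{2\lambda_2}/\sqrt{\lambda_1}+1$ was made precisely so that $\rho_2 > \sqrt{2}\,\rho_1$, giving a definite gap between the two radii.

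First I would record the radii and check the hypotheses of Proposition \ref{latticepointlemma}: both $\rho_1$ and $\rho_2$ exceed $6$ once $R$ is larger than some threshold $R_\tau := 6\max\{\sqrt{\lambda_1}, \sqrt{\lambda_2}/N_\tau\}$ (any explicit such bound works). Then apply the lower bound to $|I(\rho_2)|$ and the upper bound to $|I(\rho_1)|$:
\[
|\mathbb{N}^2 \cap D_1(\tau,R)| \ge \frac{\rho_2^2 - 7\rho_2 + 7}{2} - \rho_1^2 .
\]
Substituting $\rho_2^2 = N_\tau^2 R^2/\lambda_2$ and $\rho_1^2 = R^2/\lambda_1$, the $R^2$-coefficient is $\tfrac{1}{2}\cdot\tfrac{N_\tau^2}{\lambda_2} - \tfrac{1}{\lambda_1}$, and here the design of $N_\tau$ pays off: $N_\tau > \sqrt{2\lambda_2}/\sqrt{\lambda_1}$ forces $N_\tau^2/\lambda_2 > 2/\lambda_1$, hence $\tfrac12 N_\tau^2/\lambda_2 - 1/\lambda_1 > 0$. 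Set $L_\tau$ to be this positive constant (or any positive number below it, absorbing the harmless $+7/2$ term). The linear term coming from $-7\rho_2/2 = -\tfrac{7N_\tau}{2\sqrt{\lambda_2}}R$ is exactly the $R$-term in the claimed inequality, and the remaining constant $+7/2$ is nonnegative and can simply be dropped.

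The only genuinely delicate point is the counting identity $|\mathbb{N}^2 \cap D_1(\tau,R)| = |I(\rho_2)| - |I(\rho_1)|$ together with the open/closed bookkeeping on the two bounding circles; once that is pinned down, the rest is substitution and the observation that $N_\tau$ was chosen to make the leading coefficient positive. I do not expect any real obstacle beyond being careful that $R_\tau$ is chosen large enough that \emph{both} disk radii clear the threshold $6$ required by Proposition \ref{latticepointlemma}, and that $L_\tau$ is taken strictly positive. Thus the lemma follows with $R_\tau$ and $L_\tau$ as above.
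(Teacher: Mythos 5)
Your proposal is correct and follows essentially the same route as the paper: the identity $\mathbb{N}^{2}\cap D_{1}(\tau,R)=I\bigl(N_{\tau}R/\sqrt{\lambda_{2}}\bigr)\setminus I\bigl(R/\sqrt{\lambda_{1}}\bigr)$, the threshold $R_{\tau}=\max\{6\sqrt{\lambda_{2}}/N_{\tau},\,6\sqrt{\lambda_{1}}\}$, and the constant $L_{\tau}=N_{\tau}^{2}/(2\lambda_{2})-1/\lambda_{1}$ made positive by the choice of $N_{\tau}$ are all exactly what the paper uses. The open/closed bookkeeping you flag as the delicate point is handled in the paper simply by noting the containment $I(R/\sqrt{\lambda_{1}})\subset I(N_{\tau}R/\sqrt{\lambda_{2}})$ before subtracting cardinalities, just as you do.
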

\begin{proof}
Let $\tau \in A_{0}$. 
We set $L_{\tau} := N_{\tau}^{2}/(2\lambda_{2})-1/\lambda_{1}$. 
Note that since $N_{\tau} > \sqrt{2\lambda_{2}}/\sqrt{\lambda_{1}}$, we deduce that $L_{\tau} > 0$. 
We set 
\[R_{\tau} := \max\{(6\sqrt{\lambda_{2}})/N_{\tau}, 6\sqrt{\lambda_{1}}\} (> 0). \]
Let $R \geq R_{\tau}$. 
Note that $N_{\tau}R/\sqrt{\lambda_{2}} \geq 6$, $R/\sqrt{\lambda_{1}} \geq 6$ and 
\begin{equation} \label{circlesetminus}
\mathbb{N}^{2} \cap D_{1}(\tau, R) = I\left(\frac{N_{\tau}R}{\sqrt{\lambda_{2}}}\right) \setminus I\left(\frac{R}{\sqrt{\lambda_{1}}}\right). 
\end{equation}
Also, we have $I\left((N_{\tau}R)/\sqrt{\lambda_{2}}\right) \supset I\left(R/\sqrt{\lambda_{1}}\right)$. 
By (\ref{circlesetminus}) and Proposition \ref{latticepointlemma}, we deduce that 
\begin{align*}
|\mathbb{N}^{2} \cap D_{1}(\tau, R)|
&= \left|I\left(\frac{N_{\tau}R}{\sqrt{\lambda_{2}}}\right)\right| - \left|I\left(\frac{R}{\sqrt{\lambda_{1}}}\right)\right| \\
&\geq \frac{1}{2}\left( \frac{(N_{\tau}R)^{2}}{\lambda_{2}} - 7 \frac{N_{\tau}R}{\sqrt{\lambda_{2}}} + 7 \right) - \frac{R^{2}}{\lambda_{1}} 
> L_{\tau}R^{2} -  \frac{7N_{\tau}}{2\sqrt{\lambda_{2}}} R. 
\end{align*}
Therefore, we have proved our lemma. 
\end{proof}

By Lemma \ref{insidetwocircle}, Proposition \ref{latticepointlemma} and Lemma \ref{latticepointestimate}, we now prove the intuitive estimate (\ref{intuitivelatticepointnumber}) rigorously. 
\begin{proof}[Rigorous proof of the estimate  (\ref{intuitivelatticepointnumber})]
Let $\tau \in A_{0}$. 
We set $r_{\tau} := K_{0} R_{\tau}^{-1} (> 0)$ and $M_{\tau} := (7N_{\tau})/(2\sqrt{\lambda_{2}})$. 
We show that for all $r \in (0, r_{\tau}]$, 
\begin{equation} \label{taulatticepointestimate}
|I_{\tau}(r)| = |\{ b \in I_{\tau} \ | \ r/N_{\tau} \leq K_{0} |b|^{-1} < r \}|  \geq L_{\tau} K_{0}^{2} r^{-2} - M_{\tau} K_{0} r^{-1}. 
\end{equation}
Let $r \in (0, r_{\tau}]$. 
We set $R := K_{0} r^{-1}$. 
Note that $r \leq r_{\tau}$ if and only if $R \geq R_{\tau}$. 
Recall that $I_{\tau}(r) := \{ b \in I_{\tau} \ | \ r/N_{\tau} \leq K_{0} |b|^{-1} < r \}$ and 
\[I_{\tau}(r) = \{ b \in I_{\tau} \ | \ K_{0}r^{-1} < |b| \leq N_{\tau}K_{0}r^{-1} \} = I_{\tau} \cap D_{2}^{\prime}(K_{0}r^{-1}, N_{\tau}K_{0}r^{-1}). \]
Recall that $R := K_{0} r^{-1}$, $D_{1}(\tau, R) := D_{1}^{\prime}(\tau, R, N_{\tau}R)$ and $M_{\tau} := (7N_{\tau})/(2\sqrt{\lambda_{2}})$. 
By Lemmas \ref{insidetwocircle} and \ref{latticepointestimate}, it follows that 
\begin{align*}
|I_{\tau}(r)| 
&= |I_{\tau} \cap D_{2}^{\prime}(K_{0}r^{-1}, N_{\tau}K_{0}r^{-1})| 
= |I_{\tau} \cap D_{2}^{\prime}(R, N_{\tau}R)|
\geq |\mathbb{N}^{2} \cap D_{1}^{\prime}(\tau, R, N_{\tau}R)| \\
&= |\mathbb{N}^{2} \cap D_{1}(\tau, R)| 
\geq L_{\tau} R^{2} -  M_{\tau} R 
= L_{\tau} K_{0}^{2} r^{-2} - M_{\tau} K_{0} r^{-1}. 
\end{align*}
Thus, we have proved the inequality (\ref{taulatticepointestimate}). 
\end{proof}
We now give the proof of the main result Theorem \ref{main}. 
\begin{proof}[Proof of Theorem \ref{main}]
Let $\tau \in A_{0}$. 
Recall that there exists the unique $h_{S_{\tau}}$-conformal measure $m_{\tau}$ of $S_{\tau}$. 
We set $r_{\tau} := K_{0} R_{\tau}^{-1} (> 0)$ and $M_{\tau} := (7N_{\tau})/(2\sqrt{\lambda_{2}})$. 

We first show that for all $r \in (0, r_{\tau}]$, 
\begin{equation} \label{conformalmeasureestimate}
m_{\tau}(B(0, r)) \geq L_{\tau} K_{0}^{2-3h_{\tau}} N_{\tau}^{-2h_{\tau}} r^{2h_{\tau}-2} - M_{\tau} K_{0}^{1-3h_{\tau}} N_{\tau}^{-2h_{\tau}} r^{2h_{\tau}-1}. 
\end{equation}
By Lemma \ref{basicconformalestimate} and the definition of $I_{\tau}(r)$, we have that for all $b \in I_{\tau}(r)$, 
$\phi_{b}(X) \subset B(0, K_{0} |b|^{-1}) \subset B(0, r)$. 
It follows that
\begin{equation} \label{conformalmappinginclusion}
\bigcup_{b \in I_{\tau}(r)} \phi_{b}(X) \subset B(0, r). 
\end{equation}
In addition, if $b, b^{\prime} \in  I_{\tau}$ with $b \neq b^{\prime}$, then $m_{\tau}(\phi_{b}(X) \cap \phi_{b^{\prime}}(X) ) = 0$ by the definition of the conformal measure (Proposition \ref{existenceofconformalmeasure}).
Thus, by inclusion (\ref{conformalmappinginclusion}) and Lemma \ref{basicconformalestimate}, it follows that 
\begin{align*}
m_{\tau}(B(0, r)) &\geq m_{\tau}\left(\bigcup_{b \in I_{\tau}(r)} \phi_{b}(X)\right) 
= \sum_{b \in I_{\tau}(r)} m_{\tau}(\phi_{b}(X)) \geq \sum_{b \in I_{\tau}(r)} K_{0}^{-h_{\tau}}|b|^{-2h_{\tau}} \\
&\geq \sum_{b \in I_{\tau}(r)} K_{0}^{-h_{\tau}} \left( \frac{r}{N_{\tau}K_{0}} \right)^{2h_{\tau}} = |I_{\tau}(r)| K_{0}^{-3h_{\tau}} N_{\tau}^{-2h_{\tau}} r^{2h_{\tau}}. 
\end{align*}
By the inequality (\ref{taulatticepointestimate}), we obtain that 
\[ m_{\tau}(B(0, r)) \geq L_{\tau} K_{0}^{2-3h_{\tau}} N_{\tau}^{-2h_{\tau}} r^{2h_{\tau}-2} - M_{\tau} K_{0}^{1-3h_{\tau}} N_{\tau}^{-2h_{\tau}} r^{2h_{\tau}-1}. \]
Thus, we have proved inequality (\ref{conformalmeasureestimate}). 

We now show that $\mathcal{H}^{h_{\tau}}(J_{\tau}) = 0$. 
For each $j \in \mathbb{N}$, we set $z_{j} := 0$ and $r_{j} := r_{\tau}/j \ (\in (0, r_{\tau}])$. 
Note that $\{r_{j}\}_{j \in \mathbb{N}}$ is a sequence in the set of positive real numbers and by Lemma \ref{Xinftythorem},
$\{z_{j}\}_{j \in \mathbb{N}}$ is a sequence in $X_{\tau}(\infty)$. 
Thus, by the inequality (\ref{conformalmeasureestimate}), we deduce that for each $j \in \mathbb{N}$, 
\begin{align*}
&\frac{m_{\tau}(B(z_{j}, r_{j}))}{r_{j}^{h_{\tau}}} 
= \frac{m_{\tau}(B(0, r_{j}))}{r_{j}^{h_{\tau}}} \\
&\geq L_{\tau} K_{0}^{2-3h_{\tau}} N_{\tau}^{-2h_{\tau}} r_{j}^{h_{\tau}-2} - M_{\tau} K_{0}^{1-3h_{\tau}} N_{\tau}^{-2h_{\tau}} r_{j}^{h_{\tau}-1} \\
&= L_{\tau} K_{0}^{2-3h_{\tau}} N_{\tau}^{-2h_{\tau}} r_{\tau}^{h_{\tau}-2} j^{2-h_{\tau}} - M_{\tau} K_{0}^{1-3h_{\tau}} N_{\tau}^{-2h_{\tau}} r_{\tau}^{h_{\tau}-1} \left(\frac{1}{j}\right)^{h_{\tau}-1}.
\end{align*}
By Lemma \ref{betweenoneandtwo}, we have that $2-h_{\tau} > 0$ and $h_{\tau} - 1 > 0$. It follows that
\[ \limsup_{j \to \infty} \frac{m_{\tau}(B(z_{j}, r_{j}))}{r_{j}^{h_{\tau}}} = \infty. \]
By Theorem \ref{Hausdorffmeasureestimate}, we obtain that $\mathcal{H}^{h_{\tau}}(J_{\tau}) = 0$. 

We finally show that $\mathcal{P}^{h_{\tau}}(J_{\tau}) > 0$. 
Let $\tau = u + iv \in A_{0}$. 
We set $b_{2} := 2 + \tau \in I_{\tau}$. 
We use some notations in Lemma \ref{GCCFisCIFS}. 
For any $z = x + iy \in X$, 
\begin{align*}
g_{b_{2}}(z) &= z +(2 +\tau) \\
&= (x + 2 + u) + i(y + v) \in \{z \in \mathbb{C} \ | \ \Re z > 1 \} = \inter(Y). 
\end{align*}
Since $f(\partial X) = \partial Y \cup \{ \infty \}$ and $f \colon X \to Y \cup \{ \infty \}$ is bijective, we have 
\[\phi_{b_{2}}(X) = (f^{-1}\circ g_{b_{2}})(X) \subset \inter(X). \]
Therefore, we obtain that $J_{\tau} \cap \inter(X) \neq \emptyset$. 
Since $S_{\tau}$ is hereditarily regular and $J_{\tau} \cap \inter(X) \neq \emptyset$, we deduce that $\mathcal{P}^{h_{\tau}}(J_{\tau}) > 0$ by Theorem \ref{packingmeasureestimate}. 
\end{proof}

\end{document}